%% file: root.tex
\documentclass[9pt,shortpaper,twoside,web]{ieeecolor}

\usepackage[noadjust]{cite}
\usepackage{mathdots}
\usepackage{dsfont}
\usepackage{pgfplotstable}
\usepackage{pgfplots}
\let\labelindent\relax
\usepackage{enumitem}
\usepackage{generic}
\usepackage{comment}
\usepackage{mathtools,amsmath}
\mathtoolsset{showonlyrefs}
\usepackage{tabularx} 
\usepackage{diagbox}
\usepackage{graphicx}
\usepackage{mathrsfs}
\usepackage{cite}
\usepackage{amssymb,amsfonts}
\usepackage{subcaption}

\usepackage{pgfplots,tikz}
\usepackage{pgfplotstable}
\usepgfplotslibrary{units}
\usetikzlibrary{shapes,arrows,patterns,positioning,spy,backgrounds}
\tikzstyle{block} = [draw, fill=gray!20, rectangle, 
    minimum height=2em, minimum width=4em]
\tikzstyle{sum} = [draw, fill=gray!20, circle, node distance=1.5cm]
\tikzstyle{input} = [coordinate]
\tikzstyle{output} = [coordinate]
\tikzstyle{pinstyle} = [pin edge={to-,thin,black}]

\DeclareMathOperator{\rank}{rank}
\DeclareMathOperator{\tr}{tr}

\DeclareMathOperator{\modu}{mod}
\DeclareMathOperator{\spec}{spec}

\newcommand{\set}[2]{\left\{#1 \mid #2\right\}}
\newcommand{\norm}[1]{\left\|#1\right\|}

\usepackage{float}
\usepackage{accents}
\usepackage{multicol}
\usepackage{multirow}

\usepackage{array,multirow}
\usepackage{hhline}
\usepackage{arydshln}

\newtheorem{theorem}{Theorem}

\newtheorem{lemma}[theorem]{Lemma}
\newtheorem{example}[theorem]{Example}

\newtheorem{proposition}[theorem]{Proposition}
\newtheorem{problem}{Problem}

\newtheorem{remark}[theorem]{Remark}

\newtheorem{definition}[theorem]{Definition}

\usepackage{algorithm}
\usepackage{algpseudocode}
\algrenewcommand\algorithmicrequire{\textbf{Given:}}
\algrenewcommand\algorithmicensure{\textbf{Return:}}

\usepackage{algpseudocode}
\usepackage{algorithm}

\usetikzlibrary{calc}

\usepackage{textcomp}
\def\BibTeX{{\rm B\kern-.05em{\sc i\kern-.025em b}\kern-.08em
    T\kern-.1667em\lower.7ex\hbox{E}\kern-.125emX}}
\begin{document}
\title{Robust Stabilization of Discrete-Time Linear Systems Requires Nonlinear Dynamic Feedback}
\author{Amir Shakouri, Marieke Heidema, 
Henk J. van Waarde
\thanks{Amir Shakouri and Marieke Heidema contributed equally to this work. The work of Marieke Heidema was supported by the CogniGron research center and the Ubbo Emmius Funds. The work of Henk van Waarde was supported by
the Dutch Research Council under the NWO Talent Programme Veni
Agreement (VI.Veni.22.335).}
\thanks{The authors are with the Bernoulli Institute for Mathematics, Computer Science and Artificial Intelligence, University of Groningen, The Netherlands (e-mail: a.shakouri@rug.nl; h.m.heidema@rug.nl; h.j.van.waarde@rug.nl). 
}
}

\maketitle
\thispagestyle{empty}
\pagestyle{empty}

\begin{abstract}
This paper studies the problem of robust stabilization of linear input-state systems in discrete time. We prove that for \emph{any} compact set of stabilizable systems, there exists a dynamic state-feedback controller that globally asymptotically stabilizes all systems in the set. In addition, we show that for some compact sets of stabilizable systems, no nonlinear static or linear dynamic state-feedback law can achieve this task. This proves that, in general, robust stabilization \emph{requires} a feedback law that is both nonlinear and dynamic. We extend our study to robust exponential stabilization with a given rate of decay. Finally, for polytopic sets of systems, we introduce an algorithm for the design of robust feedback laws.
\end{abstract}

\begin{IEEEkeywords}
Robust control, uncertain systems, dynamic feedback, simultaneous stabilization.
\end{IEEEkeywords}

\input{introduction}

\input{preliminaries}

\input{problem_statement}

\input{quadratic_stabilization}

\input{main_results}

\input{fundamental_limitations}

\input{design}

\input{examples}

\input{conclusion}
\input{appendix}

\section*{References}

\bibliographystyle{IEEEtran}
\bibliography{biblo}

\end{document}

%% file: introduction.tex
\section{Introduction}
\label{sec:introduction}

The problem of robust stabilization deals with the design of a single feedback controller that simultaneously stabilizes all systems within a given family. This problem has a rich history, which especially took flight during the 1980s, leading to the celebrated solution to the $H_\infty$--optimal control problem \cite{doyle2002state}. The considered family of systems is typically formalized by a nominal system and an uncertainty description that characterizes how the true system may differ from the nominal one. Many different types of uncertainty descriptions have been studied, ranging from parametric to dynamic uncertainty, see, e.g., \cite{bhattacharyya1995robust,zhou1996robust,scherer1999lecture}.

An important preliminary problem to robust stabilization is that of robust stability, referring to the asymptotic stability of an entire family of autonomous systems. For some families of linear time-invariant (LTI) systems, robust stability analysis can be accomplished using Kharitonov's theorem \cite{kharitonov1978asymptotic} or its extensions \cite{barmish1989generalization,anderson2003robust,hollot2003some}. However, it is known that in general, there is no numerically tractable method to assess robust stability of an arbitrary family of systems \cite{blondel1997np}. This has motivated the study of \emph{quadratic stability}. A family of systems is called quadratically stable if all its members admit a \emph{common} quadratic Lyapunov function \cite{petersen1987notions}. For several families of systems, quadratic stability can be assessed numerically using, for example, linear matrix inequalities (LMIs) \cite{khargonekar2002robust,amato2002note}. However, not all robustly stable families of systems admit a common Lyapunov function, meaning that quadratic stability is, in general, only a sufficient condition for robust stability. To reduce the conservativeness of quadratic stability, several works instead seek a \emph{parameter-dependent} Lyapunov function \cite{feron2002analysis}.

For open dynamical systems, a well-studied concept is that of \emph{quadratic stabilization}. A family of input-state systems is called \emph{quadratically stabilizable} if there exists a single linear static state-feedback that stabilizes all systems in the family with a common quadratic Lyapunov function \cite{barmish1985necessary,petersen1987stabilization}. Analogous to the case of quadratic stability, quadratic stabilization is, in general, a conservative sufficient condition for robust stabilization. This is not only due to the fact that a common Lyapunov function may not exist, but also due to the requirement of a \emph{linear static} feedback controller. As noted in \cite{petersen2003quadratic}, for some families of systems that are not quadratically stabilizable, there exist nonlinear static feedback laws that stabilize all systems within the family. Beyond static feedback laws, it has been shown, e.g., in \cite{fu1986adaptive} that, for \emph{any} compact family of stabilizable systems, there exists a switching controller that stabilizes all systems within the family. However, despite these relevant findings, there are gaps in the existing literature. In particular, it is unclear what the fundamental limitations of nonlinear static feedback laws are. Moreover, the potential of nonlinear \emph{dynamic} feedback laws has not yet been fully explored. 

In this paper, we focus on robust stabilization of discrete-time linear input-state systems. We study general uncertainty descriptions in the form of arbitrary compact sets of stabilizable systems. Our main contribution is threefold:
\begin{enumerate}[left=0pt, labelsep=0.5em]
    \item We show that for every compact set of stabilizable systems there exists a (time-invariant) \emph{nonlinear dynamic} state-feedback that globally stabilizes all members of the set (see Theorem~\ref{th:dynamic}). We also provide a method to construct such feedback laws (see Theorem~\ref{th:dynamic_cont} and Algorithm~\ref{alg:1}).
    \item We study robust \emph{exponential} stabilization. We show that whenever a robust stabilizing feedback law exists, there also exists one that achieves exponential stability (see Theorem~\ref{th:dynamic}). Moreover, we extend the framework to global exponential stabilization with a given rate of decay. 
    \item We study fundamental limitations of both nonlinear static and linear dynamic feedback laws. We show that for some compact sets of systems, no such feedback laws can simultaneously stabilize all members of the set (see Theorems~\ref{thm: static state-feedback} and~\ref{thm: dynamic linear}). 
\end{enumerate}

The remainder of this paper is organized as follows. In Section~\ref{sec:II}, we discuss notation and preliminaries. Section~\ref{sec:III} then formalizes the research problem. In Section~\ref{sec:quadratic stabilizability}, we introduce and study the notion of $\alpha$--quadratic stabilizability. 
Section~\ref{sec:IV} studies robust stabilization using nonlinear dynamic state-feedback, providing the solution to the research problem. Fundamental limitations of nonlinear static and linear dynamic feedback laws are discussed in Section~\ref{sec:V}. 
Section~\ref{sec:VI} provides algorithms for covering a polytopic set of systems by a finite number of subsets which are, individually, $\alpha$--quadratic stabilizable. 
The results are then applied to an example in Section~\ref{sec: examples} and concluded upon in Section~\ref{sec: conclusion}.

%% file: Preliminaries.tex
\section{Preliminaries} \label{sec:II}

\subsection{Notation}

Let $\mathbb{Z}_+$ and $\mathbb{N}$ denote the sets of nonnegative and positive integers, respectively. For vectors $x,y\in\mathbb{R}^n$, the inequality $x\preceq y$ is meant element-wise, i.e., $x_i\leq y_i$ for all \mbox{$i\in\{1,\ldots,n\}$}. The spectral norm of a matrix $M\in\mathbb{R}^{n\times m}$ is denoted by $\|M\|$. The ball of matrix pairs centered around $(\hat{A},\hat{B})\in\mathbb{R}^{n\times n}\times\mathbb{R}^{n\times m}$ with radius $r\geq 0$ is denoted by
\begin{equation}
\mathcal{B}_r(\hat{A},\hat{B})\!\coloneqq\! \set{(A,B)\!\in\!\mathbb{R}^{n\times n}\times \mathbb{R}^{n\times m}}{\norm{\begin{bmatrix}
    \hat{A}\!-\!A \!\!&\!\! \hat{B}\!-\!B
\end{bmatrix}}\!\leq\! r}.
\end{equation}
For a square matrix $M$, we denote the set of its eigenvalues by $\spec(M)$. We say $M$ is positive definite (resp., positive semi-definite), denoted by $M > 0$ (resp., $M\geq 0$), if it is symmetric and every $\lambda\in\spec(M)$ satisfies $\lambda>0$ (resp., $\lambda\geq 0$). 


\subsection{Partial Lyapunov stability}

Partial stability extends the usual notion of Lyapunov stability to one that only concerns the behavior of a subset of the state variables (see \cite{vorotnikov1998partial}). Let $n_1,n_2\in\mathbb{N}$ and consider the discrete-time system
\begin{equation}
\label{eq:sys0}
\xi(t+1)=h(\xi(t)),
\end{equation}
where $t\in\mathbb{Z}_+$, $\xi(t)\in\mathbb{R}^{n_1+n_2}$, and $h:\mathbb{R}^{n_1+n_2}\rightarrow \mathbb{R}^{n_1+n_2}$. We partition $\xi(t)$ as $\xi(t)^\top=\begin{bmatrix}
\xi_1(t)^\top \!\!&\!\!
\xi_2(t)^\top
\end{bmatrix}$, where $\xi_1(t)\in\mathbb{R}^{n_1}$ and $\xi_2(t)\in\mathbb{R}^{n_2}$.
Let $\xi_1(t,\xi_{10},\xi_{20})$ and $\xi_2(t,\xi_{10},\xi_{20})$, respectively, denote the states of \eqref{eq:sys0} corresponding to initial conditions \mbox{$\xi_1(0)=\xi_{10}\in\mathbb{R}^{n_1}$} and \mbox{$\xi_2(0)=\xi_{20}\in\mathbb{R}^{n_2}$}. We recall the definition of partial Lyapunov stability as follows. 

\begin{definition}[{\cite[Def. 13.6]{haddad2008nonlinear}}]
\label{def:partial_Lyap}
We say that system \eqref{eq:sys0} is
\begin{enumerate}[label=(\roman*),ref=\ref{def:partial_Lyap}(\roman*)]
    \item\label{def:partial_Lyap(i)} \emph{Lyapunov stable with respect to $\xi_1$} if for every \mbox{$\varepsilon>0$} there exists $\delta>0$ such that if $\norm{\xi_{10}}\leq\delta$, then $\norm{\xi_1(t,\xi_{10},\xi_{20})}\leq\varepsilon$ for all $t\in\mathbb{Z}_+$ and all $\xi_{20}\in\mathbb{R}^{n_2}$. 
    \item\label{def:partial_Lyap(ii)} \emph{globally asymptotically stable with respect to $\xi_1$} if it is Lyapunov stable with respect to $\xi_1$ and for every $\xi_{10}\in\mathbb{R}^{n_1}$ and \mbox{$\xi_{20}\in\mathbb{R}^{n_2}$} we have \mbox{$\lim_{t\rightarrow\infty}\xi_1(t,\xi_{10},\xi_{20})=0$}. 
\end{enumerate}
\end{definition}

We note that in Definition~\ref{def:partial_Lyap(i)}, the bound $\delta$ does not depend on the second part of the initial state $\xi_{20}$. This makes both notions of Definition~\ref{def:partial_Lyap} to be uniform\footnote{Because of this property, the notion in Definition~\ref{def:partial_Lyap} is also known as Lyapunov stability with respect to $\xi_1$ uniformly in $\xi_2$, see \cite[Def. 13.6]{haddad2008nonlinear}. For the sake of simplicity, we drop the emphasis on uniformity in this paper.} in $\xi_{20}$.

%% file: problem_statement.tex
\section{Problem Formulation}
\label{sec:III}

Let $n,m\in\mathbb{N}$, and consider the discrete-time LTI system
\begin{equation}
\label{eq:1}
x(t+1)=Ax(t)+Bu(t),
\end{equation}
where $t\in\mathbb{Z}_+$ denotes time, $x(t)\in\mathbb{R}^n$ is the state, and $u(t)\in\mathbb{R}^m$ is the input.
We identify the system~\eqref{eq:1} with the pair of matrices $(A,B)\in\Sigma\coloneqq \mathbb{R}^{n\times n}\times \mathbb{R}^{n\times m}$.

Consider the system $(A_\text{true},B_\text{true})\in\Sigma$, referred to as the true system. The true system is considered unknown, but contained in a given \emph{compact} set $\Sigma_{\text{pk}}\subset\Sigma$, i.e.,
$(A_\text{true},B_\text{true})\in\Sigma_{\text{pk}}$.
The set $\Sigma_{\text{pk}}$ represents our \textit{prior knowledge} of the true system\footnote{For instance, one may consider $\Sigma_{\text{pk}}=\{(A_0,B_0)\}+\Delta$, where $(A_0,B_0)$ is a nominal system and $\Delta\subset\Sigma$ is an uncertainty set.}. The problem of robust stabilization is to find a \emph{single} feedback law that simultaneously stabilizes \emph{all} systems within $\Sigma_{\text{pk}}$. This way, one guarantees that the unknown true system is also stabilized. 

In this paper, we consider \emph{dynamic} state-feedback laws of the form
\begin{equation}
\label{eq:fd}
\begin{split}
    z(t+1)&=f(x(t),z(t)), \\ 
    u(t)&=g(x(t),z(t)),
\end{split}
\end{equation} 
where $z\in\mathbb{R}^p$ is the state of the controller, and \mbox{$f:\mathbb{R}^n\times\mathbb{R}^p\rightarrow \mathbb{R}^p$} and $g:\mathbb{R}^n\times\mathbb{R}^p\rightarrow \mathbb{R}^m$ are functions of $x(t)$ and $z(t)$. The closed-loop system obtained by interconnecting the system \eqref{eq:1} and the feedback law \eqref{eq:fd} is
\begin{equation}
\label{eq:def2}
\begin{split}
x(t+1)&=Ax(t)+Bg(x(t),z(t)), \\
z(t+1)&=f(x(t),z(t)).
\end{split}
\end{equation}
By $x(t,x_0,z_0)$ and $z(t,x_0,z_0)$, respectively, we denote the solutions $x(t)$ and $z(t)$ generated by \eqref{eq:def2} starting from the initial conditions $x(0)=x_0$ and $z(0)=z_0$.



Now, our objective is to find a single feedback law~\eqref{eq:fd} that stabilizes~\eqref{eq:1} for all $(A,B)\in\Sigma_\text{pk}$. Depending on the given $\Sigma_\text{pk}$, this may be possible or not, motivating the following definition. 

\begin{definition} 
\label{def:robust dynamic stab}
We say that $\Sigma_{\text{pk}}$ \emph{enables robust stabilization} if there exist $p\in\mathbb{N}$, \mbox{$f:\mathbb{R}^n\times\mathbb{R}^p\rightarrow \mathbb{R}^p$}, and $g:\mathbb{R}^n\times\mathbb{R}^p\rightarrow \mathbb{R}^m$ such that the following properties are satisfied:
\begin{enumerate}[label=(P\arabic*),ref=(P\arabic*)]
    \item\label{(P2)} For every $(A,B)\in\Sigma_{\text{pk}}$ the closed-loop system \eqref{eq:def2} is globally asymptotically stable with respect to $x$. 
    \item\label{(P1)} For every $x_0\in\mathbb{R}^n$, $z_0\in\mathbb{R}^p$, and $(A,B)\in\Sigma_{\text{pk}}$, $z(t,x_0,z_0)$ is bounded. 
\end{enumerate}
\end{definition}

Note that \ref{(P2)} ensures that the system's state $x(t)$ is Lyapunov stable and converges to zero, while \ref{(P1)} means that the controller's state remains bounded. The latter is motivated by the fact that in the absence of \ref{(P1)}, the computation of $z(t)$, and thus $u(t)$, would suffer from numerical problems. 


In practice, it is often important to ensure that the closed-loop system converges \emph{exponentially} with a certain rate. This motivates the following definition. 

\begin{definition} 
\label{def:alph-robust exp stab}
Let $\alpha\in(0,1]$. We say that $\Sigma_{\text{pk}}$ \emph{enables robust \mbox{$\alpha$--exponential} stabilization} if there exist $p\in\mathbb{N}$, \mbox{$f:\mathbb{R}^n\times\mathbb{R}^p\rightarrow \mathbb{R}^p$}, and $g:\mathbb{R}^n\times\mathbb{R}^p\rightarrow \mathbb{R}^m$ such that \ref{(P1)} holds and the following property is satisfied:
\begin{enumerate}[label=(P\arabic*),ref=(P\arabic*)]\setcounter{enumi}{2}
    \item\label{(P4)} There exists $c\geq 1$ and $\eta\in[0,\alpha)$ such that for every \mbox{$(A,B)\in\Sigma_{\text{pk}}$}, $x_0\in\mathbb{R}^n$, and $z_0\in\mathbb{R}^{p}$, we have  $\norm{x(t,x_0,z_0)}\leq c \eta^t \norm{x_0}$ for all \mbox{$t\in\mathbb{Z}_+$}. 
\end{enumerate}
In case $\alpha=1$, we simply say that $\Sigma_{\text{pk}}$ \emph{enables robust exponential stabilization}. 
\end{definition}



In this work, we consider the following problem. 
\begin{problem}
\label{prob:1}
Suppose that $\Sigma_{\text{pk}}$ is compact. Let $\alpha\in(0,1]$. Provide necessary and sufficient conditions under which $\Sigma_{\text{pk}}$ enables robust (\mbox{$\alpha$--exponential}) stabilization. Moreover, if possible, find \mbox{$p\in\mathbb{N}$}, \mbox{$f:\mathbb{R}^n\times\mathbb{R}^p\rightarrow \mathbb{R}^p$}, and \mbox{$g:\mathbb{R}^n\times\mathbb{R}^p\rightarrow \mathbb{R}^m$} such that properties \ref{(P2)} and \ref{(P1)} (or \ref{(P1)} and \ref{(P4)}) are satisfied. 
\end{problem}



In our problem formulation, we focused on feedback laws that are, in general, dynamic and nonlinear. The reason for this will become clear in Section~\ref{sec:V}, where we discuss fundamental limitations of both \emph{nonlinear static} and \emph{linear dynamic} feedback laws. 

%% file: quadratic_stabilization.tex
\section{Quadratic Stabilization}
\label{sec:quadratic stabilizability}

A well-known approach towards finding a robust \emph{linear static} feedback is so-called quadratic stabilization. In this section, we revisit this approach by introducing some notions and providing a few new results. These results are instrumental in solving Problem~\ref{prob:1} later on. 

We start from the following definition of $\alpha$--stabilizability.

\begin{definition}
Let $(A,B)\in\Sigma$ and $\alpha\in(0,1]$. We say that $(A,B)$ is $\alpha$--stabilizable if there exists $K\in\mathbb{R}^{m\times n}$ such that every \mbox{$\lambda\in\spec(A+BK)$} satisfies $|\lambda|<\alpha$. 
\end{definition}

Note that $1$--stabilizability coincides with stabilizability. The following lemma provides conditions for $\alpha$--stabilizability in terms of a Hautus-like test and a Lyapunov inequality. The proof of this lemma can be found in Appendix~\ref{app:proof of alpha_Hautus}.


\begin{lemma}
\label{lem:alpha_Hautus}
Let $(A,B)\in\Sigma$ and $\alpha\in(0,1]$. Then, the following statements are equivalent:
\begin{enumerate}[label=(\alph*),ref=\ref{lem:alpha_Hautus}(\alph*)]
    \item\label{lem:alpha_Hautus(a)} $(A,B)$ is $\alpha$--stabilizable.
    \item\label{lem:alpha_Hautus(b)} $\rank\begin{bmatrix}A-\lambda I & B\end{bmatrix}=n$ for all $\lambda\in\mathbb{C}$ with $|\lambda|\geq \alpha$.
    \item\label{lem:alpha_Hautus(c)} There exists $P>0$ and $K$ such that
    \begin{equation}
    \label{eq:lyap}
    \alpha^2 P-(A+BK)^\top P(A+BK) >0.
    \end{equation}
\end{enumerate}
\end{lemma}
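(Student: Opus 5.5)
The plan is to reduce everything to the standard case $\alpha=1$ by scaling. Set $\tilde A \coloneqq A/\alpha$ and $\tilde B \coloneqq B/\alpha$. Then $\spec(\tilde A+\tilde BK)=\alpha^{-1}\spec(A+BK)$, so $(A,B)$ is $\alpha$--stabilizable if and only if $(\tilde A,\tilde B)$ is stabilizable in the usual sense, with the same gain $K$. The three statements translate as follows.

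\begin{enumerate}[label=(\roman*)]
\item Since $\rank\begin{bmatrix}A-\lambda I & B\end{bmatrix}=\rank\begin{bmatrix}\tilde A-(\lambda/\alpha) I & \tilde B\end{bmatrix}$, condition (b) for $(A,B)$ is exactly the classical Hautus test for $(\tilde A,\tilde B)$ on $|\mu|\geq 1$.
\item Dividing \eqref{eq:lyap} by $\alpha^2$ gives $P-(\tilde A+\tilde BK)^\top P(\tilde A+\tilde BK)>0$, which is the classical discrete-time Lyapunov inequality for $(\tilde A,\tilde B)$.
\end{enumerate}

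With these translations, the equivalences follow from the classical results, which I would nonetheless sketch to keep the argument self-contained.

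For (a)$\Leftrightarrow$(c): if $\tilde A+\tilde BK$ is Schur, take $P=\sum_{k\ge0}((\tilde A+\tilde BK)^\top)^k(\tilde A+\tilde BK)^k$. This series converges, $P\geq I>0$, and it satisfies $P-(\tilde A+\tilde BK)^\top P(\tilde A+\tilde BK)=I>0$. Conversely, suppose $P$ satisfies the inequality and $v$ is an eigenvector of $\tilde A+\tilde BK$ with eigenvalue $\mu$. Then $v^*Pv-|\mu|^2v^*Pv>0$, which forces $|\mu|<1$.

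For (a)$\Rightarrow$(b): if $w^*\begin{bmatrix}A-\lambda I & B\end{bmatrix}=0$ with $w\neq0$ and $|\lambda|\ge\alpha$, then $w^*(A+BK)=\lambda w^*$ for every $K$. Hence $\lambda\in\spec(A+BK)$ for every $K$, which contradicts (a).

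The main step is (b)$\Rightarrow$(a), a pole-placement argument. I would use the Kalman controllability decomposition. Choose a basis in which
\[
A=\begin{bmatrix}A_{11}&A_{12}\\0&A_{22}\end{bmatrix},\qquad B=\begin{bmatrix}B_1\\0\end{bmatrix},
\]
with $(A_{11},B_1)$ controllable. Condition (b), applied to left eigenvectors of $A_{22}$ padded with zeros, shows that every eigenvalue of $A_{22}$ has modulus less than $\alpha$. Since $(A_{11},B_1)$ is controllable, the pole-placement theorem gives $K_1$ with $\spec(A_{11}+B_1K_1)$ placed, say, at $0$. Taking $K=\begin{bmatrix}K_1&0\end{bmatrix}$ then makes $A+BK$ block upper triangular with spectrum $\spec(A_{11}+B_1K_1)\cup\spec(A_{22})$, all inside the disc of radius $\alpha$. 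This step relies only on standard results, so I expect no real obstacle; the main care needed is in handling complex left eigenvectors correctly in the Hautus argument.
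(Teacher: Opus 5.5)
Your proof is correct and follows essentially the paper's route. It uses the same left-eigenvector argument for (a)$\Rightarrow$(b), the same quadratic-form eigenvector argument for (c)$\Rightarrow$(a), and pole placement followed by a Lyapunov solution for the Schur matrix $\frac{1}{\alpha}(A+BK)$; the differences are that you prove the pole-placement step yourself via the Kalman decomposition where the paper simply cites Thm.~3.32 of Trentelman et al., and that you arrange the implications as (a)$\Leftrightarrow$(b) and (a)$\Leftrightarrow$(c) rather than as a cycle. Your use of a right eigenvector $Mv=\mu v$ in the Lyapunov step is the right choice: the paper's text writes a left eigenvector there, which does not directly reduce the quadratic form.
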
\vspace{0.25 cm}

For $\alpha\in(0,1]$, we denote the set of $\alpha$--stabilizable systems by
\begin{equation}
\Sigma_\text{stab}(\alpha)\coloneqq \set{(A,B)\in\Sigma}{(A,B)\text{ is }\alpha\text{--stabilizable}}.
\end{equation}
In particular, we denote the set of stabilizable systems by \mbox{$\Sigma_\text{stab}=\Sigma_\text{stab}(1)$}. We note that for all $\alpha_1,\alpha_2\in(0,1]$ such that $\alpha_1\leq \alpha_2$, we have $\Sigma_\text{stab}(\alpha_1) \subseteq \Sigma_\text{stab}(\alpha_2)$. 

We now turn our attention to the notion of $\alpha$--quadratic stabilizability that deals with a set of systems.

\begin{definition}
\label{def:QS}
Let $\alpha\in(0,1]$ and $\mathcal{S}\subseteq\Sigma_\text{stab}(\alpha)$. We say that $\mathcal{S}$ is $\alpha$--\emph{quadratically stabilizable} if there exist $P>0$ and $K$ such that \eqref{eq:lyap} holds for all $(A,B)\in\mathcal{S}$. In case $\alpha=1$, we simply say that $\mathcal{S}$ is \emph{quadratically stabilizable}. 
\end{definition}

For \emph{compact} sets of systems, $\alpha$--quadratic stabilizability can be characterized using a nonstrict inequality instead of the strict one in~\eqref{eq:lyap}. This is discussed in the following lemma, which will be used further on in the paper. The proof of this lemma can be found in Appendix~\ref{app:proof of QS_compact}. 

\begin{lemma}
\label{lem:QS_compact}
Let $\alpha\in(0,1]$ and $\mathcal{S}\subseteq\Sigma_\text{stab}(\alpha)$ be compact. Then, $\mathcal{S}$ is $\alpha$--quadratically stabilizable \emph{if and only if} there exists $P\geq I$ and $K$ such that
\begin{equation}
\label{eq:Lyap_P^2}
\alpha^2 P-(A+BK)^\top P(A+BK)\geq I
\end{equation}
for all $(A,B)\in\mathcal{S}$.
\end{lemma}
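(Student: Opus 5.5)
The ``if'' direction is immediate. If $P\geq I>0$ and $K$ satisfy \eqref{eq:Lyap_P^2} for all $(A,B)\in\mathcal{S}$, then $\alpha^2P-(A+BK)^\top P(A+BK)\geq I>0$. Hence \eqref{eq:lyap} holds on $\mathcal{S}$, which is Definition~\ref{def:QS}. All the work is therefore in the ``only if'' direction, and there I will use compactness together with the homogeneity of \eqref{eq:lyap} in $P$.

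Assume $\mathcal{S}$ is $\alpha$--quadratically stabilizable, with $P_0>0$ and $K$ satisfying \eqref{eq:lyap} for every $(A,B)\in\mathcal{S}$. Define
\begin{equation}
\phi(A,B)\coloneqq \lambda_{\min}\bigl(\alpha^2P_0-(A+BK)^\top P_0(A+BK)\bigr).
\end{equation}
The matrix argument depends continuously (polynomially) on $(A,B)$, and the smallest eigenvalue of a symmetric matrix is continuous in the matrix. So $\phi$ is continuous on $\Sigma$, and by assumption $\phi>0$ on $\mathcal{S}$. Since $\mathcal{S}$ is compact, $\phi$ attains its minimum on $\mathcal{S}$, so $\mu\coloneqq\min_{(A,B)\in\mathcal{S}}\phi(A,B)>0$. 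If $\mathcal{S}$ is empty the statement is trivial; for example, take $P=I$ and $K=0$. This compactness step is the only place where anything beyond algebra is needed, and it is exactly why the strict inequality can be upgraded to a uniform margin. I expect it to be the main (though mild) obstacle: one must note that pointwise positivity alone would not give a uniform lower bound without compactness.

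Finally, rescale. Let $c\coloneqq\max\{1/\mu,\,1/\lambda_{\min}(P_0)\}>0$ and set $P\coloneqq cP_0$, keeping the same $K$. Then $P\geq c\lambda_{\min}(P_0)I\geq I$. Moreover, for every $(A,B)\in\mathcal{S}$,
\begin{equation}
\alpha^2P-(A+BK)^\top P(A+BK)=c\bigl(\alpha^2P_0-(A+BK)^\top P_0(A+BK)\bigr)\geq c\mu I\geq I,
\end{equation}
which is \eqref{eq:Lyap_P^2}. This completes the planned argument.
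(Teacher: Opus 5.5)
Your proposal is correct and follows essentially the same route as the paper: compactness of $\mathcal{S}$ plus continuity of the smallest eigenvalue gives a uniform margin $\mu>0$, after which $P_0$ is rescaled. The paper gets the margin by a Bolzano--Weierstrass contradiction rather than the extreme value theorem, and it obtains $P\geq I$ from $\alpha^2P\geq I$ and $\alpha\leq 1$ instead of through the $\max$ in $c$; that deduction tacitly needs $\mathcal{S}\neq\varnothing$, a case you handle separately.
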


Now, for a pair $(\hat{A},\hat{B})\in\Sigma_\text{stab}(\alpha)$, we define its \emph{radius of \mbox{$\alpha$--quadratic} stabilizability} as
\begin{equation}
\label{eq:rad_sup}
\rho_\alpha(\hat{A},\hat{B})\coloneqq\sup\set{r}{\mathcal{B}_r(\hat{A},\hat{B})\text{ is }\alpha \text{--quadratically stabilizable}}.
\end{equation}
In case $(\hat{A},\hat{B})$ is not $\alpha$--stabilizable, we define $\rho_\alpha(\hat{A},\hat{B})=0$.

We elaborate on this and other properties of the radius in the following lemma. The proof of this lemma can be found in Appendix~\ref{app: proof of lem sdp}. 
\begin{lemma}
\label{lem:properties}
Let $(\hat{A},\hat{B})\in\Sigma$ and $\alpha\in(0,1]$. Then: 
\begin{enumerate}[label=(\alph*),ref=\ref{lem:properties}(\alph*)]
    \item\label{lem:properties(a)} $0\leq\rho_\alpha(\hat{A},\hat{B})\leq \alpha$.
    \item\label{lem:properties(b)} $\rho_\alpha(\hat{A},\hat{B})=\alpha$ \emph{if and only if} $\hat{A}=0$.
    \item\label{lem:properties(bc)} $\rho_\alpha(\hat{A},\hat{B})=0$ \emph{if and only if} $(\hat{A},\hat{B})$ is not $\alpha$--stabilizable.
\end{enumerate} 
\end{lemma}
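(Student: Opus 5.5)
Each of the three items is handled with a test perturbation $\Delta=[\Delta_A\ \Delta_B]$ with $\|\Delta\|=r$, chosen so that $\mathcal{B}_r(\hat A,\hat B)$ contains a system no single $K$ can treat. Item (c) additionally uses a continuity argument based on Lemma~\ref{lem:QS_compact}.

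\emph{Item (a).} Nonnegativity is immediate from the definition. For the upper bound, fix $r>\alpha$ and suppose some $P>0$ and $K$ satisfy \eqref{eq:lyap} on $\mathcal{B}_r(\hat A,\hat B)$. Take unit vectors $v\in\mathbb{R}^n$ and $w\in\mathbb{R}^n$, and let $\zeta\coloneqq(v^\top, (Kv)^\top)^\top/\|(v,Kv)\|$. Perturb by $\Delta=-\hat A'+ s\,w\zeta^\top$ type rank-one terms; concretely, consider the pair $(A,B)=(\hat A,\hat B)+ r\,w\zeta^\top$. This pair lies in the ball, and it gives
\[
(A+BK)v=(\hat A+\hat BK)v+ r\,w\,\|(v,Kv)\|.
\]
Since $\|(v,Kv)\|\ge 1$, choosing $w$ aligned with $(\hat A+\hat BK)v$ (any unit $w$ if this vector is zero) yields $\|(A+BK)v\|\ge r>\alpha$.

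We now need this to contradict \eqref{eq:lyap}. Using the $P$-norm is cleaner: pick $v$ to be an eigenvector of $P$ for its smallest eigenvalue, and pick $w$ as an eigenvector for its largest eigenvalue. The sign of the perturbation can always be chosen to avoid cancellation. Then $v^\top(A+BK)^\top P(A+BK)v\ge \lambda_{\max}r^2\ge \alpha^2 v^\top P v$, which is the desired contradiction.

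\emph{Item (b).} If $\hat A=0$, take $K=0$ and $P=I$. For every $(A,B)\in\mathcal{B}_r(0,\hat B)$ we have $\|A\|\le r$, so \eqref{eq:lyap} holds for all $r<\alpha$ and $\rho_\alpha=\alpha$. Conversely, suppose $\hat A\neq 0$. I would sharpen the construction in (a) as follows: choose $v$ with $\hat Av\ne0$ and use the freedom in $\zeta$ so that the perturbation adds to the nominal term rather than merely matching it. This should show that for some $r<\alpha$ with $\alpha-r$ small (depending on $\hat A$), no common $(P,K)$ exists. Since $\rho_\alpha$ is a supremum, this bound must be strict, which requires a compactness argument. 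I expect making this quantitative, uniformly over all $K$ and $P$, to be the main obstacle. My first attempt would be a two-step perturbation that first cancels $\hat B$ along the relevant direction.

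\emph{Item (c).} If $(\hat A,\hat B)$ is not $\alpha$-stabilizable, then $\rho_\alpha=0$ by definition. If it is $\alpha$-stabilizable, Lemma~\ref{lem:alpha_Hautus(c)} gives $P,K$ satisfying \eqref{eq:lyap} at the nominal pair. The left-hand side of \eqref{eq:lyap} is continuous in $(A,B)$, so strict positivity persists on a small ball, and hence $\rho_\alpha>0$.
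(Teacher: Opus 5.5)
Your items (a) and (c), and the ``if'' half of (b), are sound. Item (c) and the ``if'' half of (b) match the paper. For (a) you use a different test system: a rank-one perturbation $(\hat A,\hat B)\pm r\,w\zeta^\top$ adapted to $K$ and to the extreme eigenvectors of $P$. The paper instead uses the shifts $(\hat A+\beta I,\hat B)$ with $|\beta|=r$, tested on an eigenvector of $\hat A+\hat BK$. Both work.

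The genuine gap is the ``only if'' half of (b). What you wrote there is a plan, not an argument, and the plan targets the wrong quantity. Anchoring on a $v$ with $\hat Av\neq 0$ cannot work on its own, because the gain can cancel $\hat A$. If the image of $\hat A$ lies in that of $\hat B$, some $K$ gives $\hat A+\hat BK=0$. The only obstruction is then that such a $K$ is nonzero, which amplifies the uncertainty in $B$.

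The missing idea is already in your computation for (a); you discarded the two terms that carry it. Let $r<\alpha$, and let $P>0$ and $K$ satisfy \eqref{eq:lyap} on $\mathcal{B}_r(\hat A,\hat B)$. Put $M\coloneqq\hat A+\hat BK$ and take $\zeta$ as in your (a). Let $w$ be a unit eigenvector of $P$ for $\lambda_{\max}(P)$, and choose the sign in $\pm r\,w\zeta^\top$ so the cross term is nonnegative. Then for every unit $v$,
\[
\alpha^2 v^\top Pv> v^\top M^\top PMv+r^2\bigl(1+\|Kv\|^2\bigr)\lambda_{\max}(P).
\]
Three bounds follow:
\begin{enumerate}
\item Using $v^\top Pv\le\lambda_{\max}(P)$ gives $\|K\|^2\le\alpha^2/r^2-1$.
\item Taking $v$ a bottom eigenvector of $P$ gives $\lambda_{\max}(P)/\lambda_{\min}(P)<\alpha^2/r^2$.
\item Using $v^\top M^\top PMv\ge\lambda_{\min}(P)\|Mv\|^2$ together with the previous bound gives $\|M\|^2\le(\alpha^2-r^2)\alpha^2/r^2$.
\end{enumerate}
Hence
\[
\|\hat A\|\le\|M\|+\|\hat B\|\,\|K\|\le\frac{\sqrt{\alpha^2-r^2}}{r}\bigl(\alpha+\|\hat B\|\bigr)
\]
whenever $\mathcal{B}_r(\hat A,\hat B)$ is $\alpha$--quadratically stabilizable. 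If $\rho_\alpha(\hat A,\hat B)=\alpha$, such $r$ exist arbitrarily close to $\alpha$, forcing $\hat A=0$.

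No compactness is needed. The balls are nested, so a single radius $r_0<\alpha$ at which the bound fails already gives $\rho_\alpha\le r_0<\alpha$.

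For comparison, the paper proves this direction in three steps: nilpotency of $M$, then Lemma~\ref{lem:nil}, then a further perturbation forcing $K=0$. However, it starts from one pair $(P,K)$ valid on the entire open ball of radius $\alpha$. That does not follow from the supremum, and in fact no such pair exists. By continuity it would extend to $\mathcal{B}_\alpha(\hat A,\hat B)$, which the argument of (a) run at $r=\alpha$ excludes. So the uniformity issue you flagged is real, and the quantitative bound above is what resolves it.
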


We note that, in view of Lemma~\ref{lem:properties(a)}, not all sets of \mbox{$\alpha$--stabilizable} systems are $\alpha$--quadratically stabilizable. As an example, consider $\Sigma_\text{pk}\subset\Sigma_\text{stab}(\alpha)$ large enough so that it contains a ball of systems with radius larger than or equal to $\alpha$. In this case, robust stabilization through finding an \mbox{$\alpha$--quadratically} stabilizing feedback gain is not feasible due to Lemma~\ref{lem:properties(a)}. As a result, for such cases, one has to go beyond the framework of quadratic stabilization by linear feedback. We return to this point in Section~\ref{sec:IV-static}, where we elaborate on the limitations of static feedback laws in detail. Before that, in Section~\ref{sec:IV}, we use the results of this section to construct robust feedback laws that are dynamic and possibly nonlinear.

%% file: main_results.tex
\section{Nonlinear Dynamic State-Feedback}
\label{sec:IV}

In this section, we study robust stabilization using dynamic state-feedback and provide the solution to Problem~\ref{prob:1}. 
We start with the following theorem showing that for \emph{any} compact set $\Sigma_\text{pk}\subset\Sigma_\text{stab}(\alpha)$ there exists a (possibly nonlinear) dynamic state-feedback law that simultaneously stabilizes all systems in~$\Sigma_\text{pk}$ with convergence \mbox{rate $\alpha$}. 

\begin{theorem}
\label{th:dynamic}
Suppose that $\Sigma_\text{pk}$ is compact. Let $\alpha\in(0,1]$. Then, $\Sigma_\text{pk}$ enables robust $\alpha$--exponential stabilization \emph{if and only if} \mbox{$\Sigma_\text{pk}\subset\Sigma_\text{stab}(\alpha)$}. Moreover, the following statements are \emph{equivalent}:
\begin{enumerate}[label=(\alph*),ref=\ref{th:dynamic}(\alph*)]
    \item\label{th:dynamic(a)} $\Sigma_\text{pk}$ enables robust stabilization.
    \item\label{th:dynamic(b)} $\Sigma_\text{pk}$ enables robust exponential stabilization.
    \item\label{th:dynamic(c)} $\Sigma_\text{pk}\subset\Sigma_\text{stab}$.
\end{enumerate}
\end{theorem}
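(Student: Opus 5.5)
We prove the first claim, that $\Sigma_\text{pk}$ enables robust $\alpha$--exponential stabilization if and only if $\Sigma_\text{pk}\subset\Sigma_\text{stab}(\alpha)$, by proving the two directions separately. The equivalences (a)--(c) then follow quickly.

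\emph{Necessity.} Suppose some $(A,B)\in\Sigma_\text{pk}$ is not $\alpha$--stabilizable. By Lemma~\ref{lem:alpha_Hautus(b)} there are $\lambda\in\mathbb{C}$ with $|\lambda|\geq\alpha$ and $w\neq 0$ with $w^*A=\lambda w^*$ and $w^*B=0$. For any feedback \eqref{eq:fd}, the scalar $v(t)=w^*x(t)$ then satisfies $v(t+1)=\lambda v(t)$, regardless of $f,g$. If $w$ is complex, we pass to real and imaginary parts; it suffices to pick $x_0$ real with $w^*x_0\neq 0$. Then $|v(t)|=|\lambda|^t|v(0)|\geq\alpha^t|v(0)|$. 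On the other hand, \ref{(P4)} gives $|v(t)|\leq\|w\|c\eta^t\|x_0\|$ with $\eta<\alpha$. These are incompatible for large $t$. The same argument with $\alpha=1$ shows that \ref{(P2)} fails, since $|v(t)|\geq|v(0)|>0$ and so $x(t)\not\to 0$. This yields (a)$\Rightarrow$(c). The implication (b)$\Rightarrow$(a) is immediate, because an exponential bound gives both Lyapunov stability uniform in $z_0$ and convergence. Finally, (c)$\Rightarrow$(b) is the case $\alpha=1$ of sufficiency.

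\emph{Sufficiency: covering.} Let $\Sigma_\text{pk}\subset\Sigma_\text{stab}(\alpha)$ be compact. Fix $\beta\in(0,\alpha)$ with $\Sigma_\text{pk}\subset\Sigma_\text{stab}(\beta)$. Such a $\beta$ exists because $\Sigma_\text{stab}(\alpha)$ is open and, by continuity, the map $(A,B)\mapsto\inf_K$ spectral radius of $A+BK$ is upper semicontinuous; alternatively we can work directly with the open balls below. For each $(\hat A,\hat B)\in\Sigma_\text{pk}$, Lemma~\ref{lem:properties(bc)} gives $\rho_\beta(\hat A,\hat B)>0$, so some open ball around it is $\beta$--quadratically stabilizable. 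By compactness, finitely many closed balls $\mathcal{S}_1,\dots,\mathcal{S}_N$ cover $\Sigma_\text{pk}$. By Lemma~\ref{lem:QS_compact} each $\mathcal{S}_i$ comes with $P_i\geq I$ and $K_i$ such that $\beta^2P_i-(A+BK_i)^\top P_i(A+BK_i)\geq I$ on $\mathcal{S}_i$. In particular $x^\top P_ix$ decreases by at least a factor $\beta^2$ per step whenever the true system lies in $\mathcal{S}_i$ and gain $K_i$ is used.

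\emph{Sufficiency: switching controller, the main step.} We build a supervisor that cycles through the candidate gains and discards a candidate once its Lyapunov certificate is violated. Take $z=(z_1,z_2)$, where $z_1\in\{1,\dots,N\}$ is the current index, embedded in $\mathbb{R}$, and $z_2$ is a scalar variable with the following role. We set $u=K_{z_1}x$. At each step we check whether $x(t+1)^\top P_{z_1}x(t+1)\leq\beta^2x(t)^\top P_{z_1}x(t)$. To make this implementable we let $z_2$ store the previous $x$; dimensionwise $z\in\mathbb{R}^{1+n}$, with $z_2$ the previous state and the previous input recomputable. Whenever the check fails, we switch to $z_1+1$ (mod $N$).

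Three facts drive the argument. First, the true index $i^\star$ is never rejected. Second, $z$ stays bounded: $z_1$ is finite-valued, and $z_2$ is a past $x$, which we show is bounded. Third, there are at most $N-1$ switches before the controller reaches an index that is never rejected, since a rejected index is eventually revisited only if all others are rejected, and $i^\star$ never is. Hence at most $N-1$ switches occur. Between switches $\|x\|$ grows by at most the factor $M=\max_{i,(A,B)\in\Sigma_\text{pk}}\|A+BK_i\|$, which is finite by compactness, so we obtain the key bound $\|x(t)\|\leq c\,\eta^t\|x_0\|$.

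To establish that bound, note that during non-switching steps the function $x^\top P_{z_1}x$ contracts by $\beta^2$. Each switch costs at most a factor $M\kappa$, where $\kappa=\max_i\|P_i\|$ accounts for the change of Lyapunov function, together with one uncertified step. This gives $\|x(t)\|\leq (M\kappa')^{N}\beta^{t-N}\|x_0\|$ with $\eta=\beta<\alpha$ and $c$ independent of $(A,B)$, $x_0$, and $z_0$. The main obstacle is making this bookkeeping uniform in $z_0$: an arbitrary initial $z_2$ could trigger a spurious switch or suppress a genuine one. We handle this by ignoring $z_0$ in the first check, so the first step simply records $x$, and by rounding $z_1$ to the nearest valid index, which makes the bound hold for all real $z_0$.
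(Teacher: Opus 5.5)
Your proof is correct and follows the paper's route: the left-eigenvector (Hautus) argument gives necessity and (a)$\Rightarrow$(c), a compactness argument covers $\Sigma_\text{pk}$ by finitely many closed balls that are individually quadratically stabilizable, and a cyclic switching supervisor abandons a gain as soon as its Lyapunov decrease test fails, so at most $N-1$ uncertified steps occur. The differences are only in implementation: you get $\eta<\alpha$ by passing to a uniform $\beta<\alpha$ (via upper semicontinuity of $\inf_K$ of the spectral radius of $A+BK$) and store the whole previous state ($p=n+1$), whereas the paper keeps $\alpha$, gets $\eta=\sqrt{\alpha^2-1/\lambda_{\max}}$ from the unit margin $\geq I$ in the Lyapunov inequality, and stores only the scalar target $\alpha^2x^\top P_\phi x-\|x\|^2$ ($p=2$); also, no special handling of $z_0$ is needed (nor possible for a time-invariant $f$), since the test at $t=0$ only selects a valid starting index.
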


We postpone the proof of this theorem until after Theorem~\ref{th:dynamic_cont}, presented later on in this section, where it is shown how a stabilizing dynamic state-feedback can be constructed. 
In our construction of a robust feedback law, we leverage the following result, which shows that every compact set of $\alpha$--stabilizable systems can be covered by a finite number of \mbox{$\alpha$--quadratically} stabilizable subsets (cf. \cite[Lem. 3.1]{fu1986adaptive}). Constructing such subsets using an algorithm is discussed later on in Section~\ref{sec:VI}.

\begin{lemma}
\label{lem:finite covering quadratic stab sets}
Let $\alpha\in(0,1]$. Suppose that $\Sigma_\text{pk}\subset\Sigma_\text{stab}(\alpha)$ is compact. Then, there exist $q\in\mathbb{N}$ and matrices $K_i\in\mathbb{R}^{m\times n}$ and $P_i\geq I$ for $i\in\{1,\ldots,q\}$, such that every $(A,B)\in\Sigma_\text{pk}$ satisfies
\begin{equation}
\label{eq:lyap_i}
\alpha^2 P_i-(A+BK_i)^\top P_i(A+BK_i)\geq I
\end{equation}
for some $i\in\{1,\ldots,q\}$. 
\end{lemma}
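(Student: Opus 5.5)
The plan is a compactness argument built on the positivity of the radius of $\alpha$--quadratic stabilizability from Lemma~\ref{lem:properties(bc)}. Fix an arbitrary $(\hat{A},\hat{B})\in\Sigma_\text{pk}$. Since $\Sigma_\text{pk}\subset\Sigma_\text{stab}(\alpha)$, Lemma~\ref{lem:properties(bc)} gives $\rho_\alpha(\hat{A},\hat{B})>0$. By the definition~\eqref{eq:rad_sup} of the radius as a supremum, there is some $r_{(\hat A,\hat B)}>0$ such that the closed ball $\mathcal{B}_{r_{(\hat A,\hat B)}}(\hat{A},\hat{B})$ is $\alpha$--quadratically stabilizable. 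A ball of radius $r$ is $\alpha$--quadratically stabilizable if and only if the ball of any smaller radius is, so any $r$ strictly below $\rho_\alpha$ works.

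The main technical point is that Definition~\ref{def:QS} requires $\mathcal{S}\subseteq\Sigma_\text{stab}(\alpha)$, while the ball $\mathcal{B}_r(\hat A,\hat B)$ may a priori contain pairs outside $\Sigma_\text{pk}$. This is harmless. If~\eqref{eq:lyap} holds for every pair in the ball with a common $(P,K)$, then every pair in the ball is $\alpha$--stabilizable by Lemma~\ref{lem:alpha_Hautus(c)}. So the ball is indeed a subset of $\Sigma_\text{stab}(\alpha)$. It is also compact, being closed and bounded in the finite-dimensional space $\Sigma$. Lemma~\ref{lem:QS_compact} then upgrades the strict inequality to the normalized one: there exist $P\geq I$ and $K$ with $\alpha^2P-(A+BK)^\top P(A+BK)\geq I$ for all $(A,B)$ in the ball.

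Next, the open balls $\{(A,B): \|[\hat A-A\ \ \hat B-B]\|<r_{(\hat A,\hat B)}\}$, indexed by $(\hat A,\hat B)\in\Sigma_\text{pk}$, form an open cover of the compact set $\Sigma_\text{pk}$. Extract a finite subcover with centers $(\hat A_i,\hat B_i)$, $i\in\{1,\ldots,q\}$. Let $(P_i,K_i)$ be the matrices obtained above for the closed ball $\mathcal{B}_{r_i}(\hat A_i,\hat B_i)$, which contains the corresponding open ball. Every $(A,B)\in\Sigma_\text{pk}$ lies in some open ball of the subcover, hence in the corresponding closed ball, and therefore satisfies~\eqref{eq:lyap_i} for that index $i$. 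This proves Lemma~\ref{lem:finite covering quadratic stab sets}.

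The only step needing care is the one handled above: making sure the balls used satisfy the hypothesis $\mathcal{S}\subseteq\Sigma_\text{stab}(\alpha)$ of Lemma~\ref{lem:QS_compact}, and using closed balls of radius strictly below the supremum so that the supremum in~\eqref{eq:rad_sup} need not be attained. Everything else is the standard Heine--Borel extraction.
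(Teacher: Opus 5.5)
Your proof is correct. Its skeleton matches the paper's: positivity of $\rho_\alpha$ on $\Sigma_\text{pk}$ from Lemma~\ref{lem:properties(bc)}, then a finite cover of $\Sigma_\text{pk}$ by $\alpha$--quadratically stabilizable balls centered in $\Sigma_\text{pk}$, then Lemma~\ref{lem:QS_compact} to get $P_i\geq I$ and the normalized inequality. The difference is in how compactness is used. The paper sets $\rho_\alpha(\Sigma_\text{pk})\coloneqq\inf\rho_\alpha$ over $\Sigma_\text{pk}$, asserts that this is positive ``by compactness,'' and then uses boundedness to cover $\Sigma_\text{pk}$ with balls of one common radius $r<\rho_\alpha(\Sigma_\text{pk})$. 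You instead take a radius at each point and extract a finite subcover by Heine--Borel. Your route is more self-contained. The paper's claim that the infimum is positive tacitly needs a semicontinuity property of $\rho_\alpha$, for instance $\rho_\alpha(A',B')\geq\rho_\alpha(\hat A,\hat B)-d$ with $d$ the distance between the two pairs. That inequality follows from $\mathcal{B}_{r-d}(A',B')\subseteq\mathcal{B}_r(\hat A,\hat B)$, but the paper does not state it, and you never need it. What the paper's uniform radius buys is a single length scale, and Section~\ref{sec:VI} relies on it when claiming that Algorithm~\ref{alg:1} terminates; you could recover such a scale from your cover via a Lebesgue number. You also make explicit two points the paper passes over: that the balls lie in $\Sigma_\text{stab}(\alpha)$, as Lemma~\ref{lem:QS_compact} requires, and that the supremum in~\eqref{eq:rad_sup} need not be attained. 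One small slip: you say a ball of radius $r$ is $\alpha$--quadratically stabilizable \emph{if and only if} every smaller ball is. Only the forward direction holds. The converse fails, for example, for $\hat A=0$ at $r=\alpha$, by the argument in the proof of Lemma~\ref{lem:properties(a)}, which works verbatim for $r\geq\alpha$. Nothing breaks, because you only use downward closedness, and in fact only the existence of one admissible $r>0$ at each point.
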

\begin{proof}
We define $\rho_\alpha(\Sigma_\text{pk})\coloneqq\inf\set{\rho_\alpha(A,B)}{(A,B)\in\Sigma_\text{pk}}$. It follows from Lemma~\ref{lem:properties(bc)} and the compactness of $\Sigma_\text{pk}$ that $\rho_\alpha(\Sigma_\text{pk})>0$. Let $r\in(0,\rho_\alpha(\Sigma_\text{pk}))$. Since $\Sigma_\text{pk}$ is bounded, it can be covered by finitely many balls of radius $r$. Let $q\in\mathbb{N}$ and \mbox{$(A_i,B_i)\in\Sigma_\text{pk}$}, for $i\in\{1,\ldots,q\}$, be such that \mbox{$\Sigma_\text{pk}\subseteq\bigcup_{i=1}^q\mathcal{B}_{r}(A_i,B_i)$}. Now, take \mbox{$\Sigma_i=\mathcal{B}_{r}(A_i,B_i)$}. Since $(A_i,B_i)\in\Sigma_\text{pk}$, we have $r<\rho_\alpha(A_i,B_i)$ for all $i\in\{1,\ldots,q\}$. Therefore, the set $\Sigma_i$, $i\in\{1,\ldots,q\}$, is \mbox{$\alpha$--quadratically} stabilizable. Now, it follows from Lemma~\ref{lem:QS_compact} that for every $i\in\{1,\ldots,q\}$, there exists \mbox{$P_i>0$} and $K_i$ such that \mbox{$\alpha^2 P_i-(A+BK_i)^\top P_i(A+BK_i)\geq I$} holds for all $(A,B)\in\Sigma_i$. Since $\Sigma_\text{pk}\subseteq\bigcup_{i=1}^q\Sigma_i$, the matrices $P_i$ and $K_i$ have the property that for every $(A,B)\in\Sigma_\text{pk}$ there exists \mbox{$i\in\{1,\ldots,q\}$} such that \mbox{$\alpha^2P_i-(A+BK_i)^\top P_i(A+BK_i)\geq I$} holds.
\end{proof}



Now, we formulate a feedback law as follows. To simplify the notation, the explicit time dependence of $x(t)$ and $z(t)$ is omitted. Let \mbox{$\alpha\in(0,1]$}. We take $q\in\mathbb{N}$, $P_i>0$, and $K_i$, \mbox{$i\in\{1,\ldots,q\}$}, such that they satisfy the conditions in Lemma~\ref{lem:finite covering quadratic stab sets}. We take \mbox{$p=2$}, and we partition $z\in\mathbb{R}^{2}$ as $z =
\begin{bmatrix}
z_1 & z_2
\end{bmatrix}^\top$.
We also define the function \mbox{$\phi:\mathbb{R}^n\times \mathbb{R}^2\rightarrow\{1,\ldots,q\}$} as
\begin{equation} \phi(x,z)\coloneqq \left\{\begin{array}{ll}
z_2 & \text{if }z_2\in\{1,\ldots,q\}, x^{\top} P_{z_2} x\leq z_1, \\
\!(z_2 \modu q)+ 1 & \text{if }z_2\in\{1,\ldots,q\}, x^{\top} P_{z_2} x> z_1,\\
1 & \text{otherwise}.
\end{array}\right. 
\end{equation}
We take functions $f(x,z)$ and $g(x,z)$ as
\begin{subequations}
\label{eq:fun_fg}
\begin{align}
\label{eq:fun_f}
&f(x,z)=\begin{bmatrix}
\alpha^2 x^\top P_{\phi(x,z)} x-\|x\|^2 \\ \phi(x,z)
\end{bmatrix},\\
\label{eq:fun_g}
&g(x,z)=K_{\phi(x,z)}x.
\end{align}
\end{subequations}

Now, the following theorem shows that for any compact set of \mbox{$\alpha$--stabilizable} systems, a feedback law as in~\eqref{eq:fd}, with functions $f$ and $g$ of the form \eqref{eq:fun_f} and \eqref{eq:fun_g}, robustly stabilizes all systems in $\Sigma_\text{pk}$ with convergence rate $\alpha$. 

\begin{theorem}
\label{th:dynamic_cont}
Let $\alpha\in[0,1)$. Suppose that $\Sigma_\text{pk}\subset\Sigma_\text{stab}(\alpha)$ is compact. Let $P_i>0$ and $K_i$, $i\in\{1,\ldots,q\}$, satisfy the conditions of Lemma~\ref{lem:finite covering quadratic stab sets}. Let the functions $f$ and $g$ be as in \eqref{eq:fun_f} and \eqref{eq:fun_g}. Then, properties \ref{(P1)} and \ref{(P4)} hold.
\end{theorem}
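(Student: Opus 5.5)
The plan is to follow a single closed-loop trajectory for a fixed $(A,B)\in\Sigma_\text{pk}$. I treat $\alpha$ as lying in $(0,1]$, as in Lemma~\ref{lem:finite covering quadratic stab sets}; the range $[0,1)$ in the statement appears to be a typo. The argument shows three things: the index $z_2$ switches only finitely many times, with a uniform bound on the number of switches; between switches a quadratic function decreases geometrically; and each switch costs only a bounded multiplicative factor.

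Write $V_i(x)=x^\top P_i x$ and $\sigma(t)\coloneqq\phi(x(t),z(t))$. By construction $z_2(t+1)=\sigma(t)$ and $z_1(t+1)=\alpha^2V_{\sigma(t)}(x(t))-\|x(t)\|^2$. The rule $\phi$ says $\sigma(t+1)=\sigma(t)$ exactly when $V_{\sigma(t)}(x(t+1))\le z_1(t+1)$; otherwise $\sigma$ advances cyclically.

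\emph{Step 1 (good index is never abandoned).} Fix $i^\ast$ such that \eqref{eq:lyap_i} holds for the true $(A,B)$. If $\sigma(t)=i^\ast$, then $x(t+1)=(A+BK_{i^\ast})x(t)$, and \eqref{eq:lyap_i} gives $V_{i^\ast}(x(t+1))\le \alpha^2V_{i^\ast}(x(t))-\|x(t)\|^2=z_1(t+1)$. Hence $\sigma(t+1)=i^\ast$, and by induction $\sigma$ stays equal to $i^\ast$ forever. Since $\sigma$ changes only by the cyclic increment $i\mapsto (i\bmod q)+1$, it reaches $i^\ast$ after at most $q-1$ changes. Counting the arbitrary initial value $z(0)$ (which may give $\sigma(0)=1$ through the ``otherwise'' branch), the total number of switching instants along any trajectory is at most $q$, uniformly in $(A,B)$, $x_0$, $z_0$.

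\emph{Step 2 (decay on constant-index intervals).} On any interval where $\sigma(t)=\sigma(t+1)=j$, the non-switching condition reads $V_j(x(t+1))\le\alpha^2V_j(x(t))-\|x(t)\|^2$. This holds whether or not $j$ is the good index, which is the key observation. Let $\lambda^\ast=\max_i\lambda_{\max}(P_i)$. Using $\|x\|^2\ge V_j(x)/\lambda^\ast$, we get $V_j(x(t+1))\le\eta^2V_j(x(t))$ with $\eta^2\coloneqq\alpha^2-1/\lambda^\ast<\alpha^2$. Also $\eta^2\ge0$, since $V_j\ge0$. Because $P_j\ge I$, over such an interval $\|x(t)\|\le\sqrt{\lambda^\ast}\,\eta^{t-t_0}\|x(t_0)\|$.

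\emph{Step 3 (cost of a switch).} By compactness of $\Sigma_\text{pk}$, $M\coloneqq\max_i\sup_{(A,B)\in\Sigma_\text{pk}}\|A+BK_i\|<\infty$, so one step always satisfies $\|x(t+1)\|\le M\|x(t)\|$. Splitting $[0,t]$ into at most $q+1$ constant-index blocks separated by switching steps, and chaining Step 2 on the blocks with Step 3 at the switches, gives
\begin{equation}
\|x(t)\|\le (\sqrt{\lambda^\ast})^{q+1}\Bigl(\tfrac{\max\{M,\eta\}}{\eta}\Bigr)^{q}\eta^t\|x_0\|.
\end{equation}
This is \ref{(P4)}, with a $c$ independent of $(A,B)$ and of the initial data. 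The degenerate case $\eta=0$ is handled by replacing $\eta$ with any $\eta'\in(0,\alpha)$. For \ref{(P1)}: $z_2(t)\in\{1,\ldots,q\}$ for $t\ge1$, and $|z_1(t+1)|\le(\alpha^2\lambda^\ast+1)\|x(t)\|^2$, which is bounded by the previous estimate.

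The main obstacle is the bookkeeping in Step 1 together with the observation in Step 2. One must make sure that a ``wrong'' index which happens not to trigger a switch still produces decay, and that the offset $-\|x\|^2$ built into $z_1$ is what yields the strict rate $\eta<\alpha$. I would check carefully the indexing of $z_2(t+1)=\sigma(t)$ versus $\sigma(t+1)$, and the first step, where $z_0$ is arbitrary.
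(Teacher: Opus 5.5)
Your proposal is correct and follows essentially the same route as the paper's proof: the good index is never abandoned, so the cyclic search switches at most $q-1$ times; on non-switching steps $x^\top P_{z_2}x$ contracts by $\alpha^2-1/\lambda_\text{max}$; each switching step costs at most $\max_i\sup\|A+BK_i\|$; and \ref{(P1)} follows by bounding $z_1$ by a multiple of $\|x\|^2$. You are also right that the intended range is $\alpha\in(0,1]$, which is the range the paper's proof treats.

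Your bookkeeping is in fact slightly tighter than the paper's. Its chained estimate $\kappa^{i}(\lambda_\text{max}/\lambda_\text{min})^{(i+1)/2}\eta^{t}\|x_0\|$ drops the factor $\eta^{-1}$ lost at each switching step, and it breaks down when $\eta=0$. Your constant $(\max\{M,\eta\}/\eta)^{q}$ handles the first issue, and your substitution $\eta\to\eta'$ handles the second.
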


The proof of Theorem~\ref{th:dynamic_cont} can be found in Appendix~\ref{app: proof of th dynamic_cont}.
The proof of Theorem~\ref{th:dynamic} now follows from Theorem~\ref{th:dynamic_cont}. 

\textit{Proof of Theorem~\ref{th:dynamic}:} 
Let us first show that $\Sigma_\text{pk}$ enables robust $\alpha$--exponential stabilization if and only if \mbox{$\Sigma_\text{pk}\subset\Sigma_\text{stab}(\alpha)$}. 
The ``if'' part follows immediately from Theorem~\ref{th:dynamic_cont}. For the ``only if'' part, assume that there exists $(A,B)\in\Sigma_\text{pk}$ that is not \mbox{$\alpha$--stabilizable}. Let $f$ and $g$ be arbitrary. Let $v\in\mathbb{C}^n$ and $\lambda\in\mathbb{C}$ be such that $\|v\|=1$, \mbox{$v^* B=0$}, $v^* A=\lambda v^*$, and $|\lambda|\geq\alpha$. Take $x_0\in\mathbb{R}^n$ such that \mbox{$|v^*x_0|=1$}. Observe that $v^* x(t,x_0,z_0)=\lambda^t v^*x_0$ for all $z_0\in\mathbb{R}^p$ and $t\in\mathbb{Z}_+$. Thus, we have \mbox{$|v^* x(t,x_0,z_0)|=|\lambda|^t \geq \alpha^t$}.
Since \mbox{$\|v\|\|x(t,x_0,z_0)\|\geq|v^* x(t,x_0,z_0)|$}, we have \mbox{$\|x(t,x_0,z_0)\|\geq \alpha^t$}.
This implies that there are no $c\geq1$ and $\eta\in[0,\alpha)$ such that $\|x(t,x_0,z_0)\|\leq c \eta^t$ holds for all $t\in\mathbb{Z}_+$. Therefore, there are no $f$ and $g$ satisfying property~\ref{(P4)}, hence, $\Sigma_\text{pk}$ does not enable \mbox{$\alpha$--exponential} stabilization. 

Let us now show that (a), (b), and (c) are equivalent.
The equivalence between (b) and (c) follows from the first statement of Theorem~\ref{th:dynamic} by setting $\alpha=1$. In addition, it is evident from Definitions~\ref{def:robust dynamic stab} and~\ref{def:alph-robust exp stab} that (b) implies (a). It thus suffices to show that (a) implies (c). For this, suppose that (c) does not hold. Hence, there exists $(A,B)\in\Sigma_\text{pk}$ that is not stabilizable. Let $f$ and $g$ be arbitrary. Let $v\in\mathbb{C}^n$ and $\lambda\in\mathbb{C}$ be such that $\|v\|=1$, $v^* B=0$, $v^* A=\lambda v^*$, and $|\lambda|\geq1$. Take $x_0\in\mathbb{R}^n$ such that $|v^*x_0|=1$. One can follow the same steps as before, with $\alpha=1$, to show that the state trajectory of the closed-loop system satisfies $\|x(t,x_0,z_0)\|\geq 1$ for all $t\in\mathbb{Z}_+$. Therefore, there are no $f$ and $g$ satisfying property~\ref{(P2)}, and thus, (a) does not hold. \hfill\QED

Problem~\ref{prob:1} has now been solved in Theorems~\ref{th:dynamic} and~\ref{th:dynamic_cont}. The proposed feedback law in \eqref{eq:fun_fg} behaves as follows. At each $t\in\mathbb{Z}_+$, the input takes the form \mbox{$u(t)=K_ix(t)$} for some $i\in\{1,\ldots,q\}$. In case the system's trajectory satisfies the Lyapunov inequality \eqref{eq:Lyap_P^2} with $K=K_i$ and $P=P_i$, we keep using the same feedback gain $K_i$. In case the inequality is violated, we move to another feedback gain at the next time step, namely, \mbox{$u(t+1)=K_{i+1}x(t+1)$} if $i<q$ and $u(t+1)=K_{1}x(t+1)$ if $i=q$. 
 
 \begin{remark}
The feedback law \eqref{eq:fun_fg} has a switching nature, where the switching rule is only \mbox{state-dependent} and embedded in the dynamics of the controller's state $z(t)$. This has similarities to the idea of adaptive switching control in \cite{fu1986adaptive}, which deals with continuous-time systems. There, the switching controller is also based on a covering of $\Sigma_\text{pk}$ by a finite number of sets, each of which admits a robust linear feedback law. The controller in \cite{fu1986adaptive} switches between such feedback laws due to a switching rule that is both time and state-dependent. Another line of research that shares a similar idea is that of supervisory control \cite{morse1996supervisory} and adaptive control using multiple models \cite{narendra1997adaptive}. However, both of these methods need $\Sigma_\text{pk}$ to be finite.
 \end{remark}

\begin{remark}
    The dimension $p\in\mathbb{N}$ of the controller's state is a measure for the complexity of the controller. It is desirable to find controllers with the \emph{least} possible number of states.
    Interestingly, the proposed feedback law has $p=2$ regardless of the system dimensions $n$ and $m$. 
    Without going into too much detail, we note that this can be further reduced to $p=1$ by encoding the two controller states in \eqref{eq:fun_fg} into one, e.g., using a sigmoid function. This is possible since the second entry of $z(t)$ in \eqref{eq:fun_fg} is an integer for all $t\geq 1$. 
\end{remark}

%% file: fundamental_limitations.tex
\section{Fundamental Limitations of Static and Linear Dynamic Feedback}
\label{sec:V}

In the previous section, it was shown by Theorem \ref{th:dynamic} that for any compact set $\Sigma_\text{pk}\subset \Sigma_\text{stab}$, there exists a (nonlinear) dynamic state-feedback law that simultaneously stabilizes all systems within $\Sigma_\text{pk}$. In this section, we show that for some compact sets $\Sigma_\text{pk}\subset \Sigma_\text{stab}$, there exist neither static nor linear dynamic state-feedback laws\footnote{We note that similar claims have been made for other notions of stability in input-output systems that are different from the asymptotic stability notion considered here. For instance, it was shown in \cite{khargonekar1987robust,cusumano1988nonlinear} that, for some families of systems, no linear or nonlinear controller can achieve a \emph{common} bound on the induced input-output gain over the entire family.} to stabilize all systems within $\Sigma_\text{pk}$. Therefore, in general, robust stabilization \textit{requires} a state-feedback that is both nonlinear and dynamic.

\subsection{Fundamental limitation of static state-feedback}
\label{sec:IV-static}
In this section, we consider (nonlinear) static state-feedback laws of the form
\begin{equation}
\label{eq:fd_static}
u(t)=g(x(t)),
\end{equation}
where $g:\mathbb{R}^n\rightarrow\mathbb{R}^m$. We investigate whether for any compact set $\Sigma_\text{pk}\subseteq \Sigma_\text{stab}$ there exists such a feedback that simultaneously stabilizes all systems within $\Sigma_\text{pk}$. The closed-loop system constructed by \eqref{eq:1} and \eqref{eq:fd_static} is of the form
\begin{equation}
\label{eq:def1}
x(t+1)=Ax(t)+Bg(x(t)).
\end{equation} 
Here, we consider a definition analogous to  Definition~\ref{def:robust dynamic stab} that concerns static feedback laws. 

\begin{definition}
We say that $\Sigma_\text{pk}$ \emph{enables robust stabilization by static state-feedback} if there exists $g:\mathbb{R}^n\rightarrow \mathbb{R}^m$ such that \eqref{eq:def1} is globally asymptotically stable for all $(A,B)\in\Sigma_\text{pk}$. 
\end{definition}

The following theorem shows that not all compact sets of stabilizable systems enable robust stabilization by \emph{static} state-feedback, revealing a fundamental limitation of this class of feedback laws for robust control. 

\begin{theorem}
\label{thm: static state-feedback}
There exists a compact set $\Sigma_\text{pk}\subseteq\Sigma_\text{stab}$ that does not enable robust stabilization by static state-feedback.  
\end{theorem}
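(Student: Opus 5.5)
The plan is to give an explicit scalar counterexample ($n=m=1$) in which the input gain can have either sign. I would take $\Sigma_\text{pk}=\{(2,1),(2,-1)\}$, enlarged if needed to a compact set such as $\{(2,b): \tfrac12\leq |b|\leq 1\}$. Every member has $b\neq 0$, so by Lemma~\ref{lem:alpha_Hautus(b)} it is stabilizable, and the set is clearly compact. For an arbitrary $g:\mathbb{R}\to\mathbb{R}$, write $g(x)=k(x)x$ for $x\neq 0$, so that the closed loop of $(2,b)$ reads $x(t+1)=(2+b\,k(x(t)))\,x(t)$. The key elementary observation is the following. For every $x\neq 0$ the two multipliers $2+k(x)$ and $2-k(x)$ sum to $4$, so at least one of them is $\geq 2$. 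Hence at every nonzero state at least one member of $\Sigma_\text{pk}$ at least doubles $|x|$ in one step and preserves the sign of $x$. With the enlarged family, every $b$ with $\tfrac12\leq|b|\leq1$ and $\operatorname{sign}(b)=\operatorname{sign}(k(x))$ gives a multiplier $\geq 2$, and no larger set of multipliers than that.

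Next I would turn this pointwise statement into a single diverging closed-loop trajectory, and for this I would use both halves of global asymptotic stability. Suppose $g$ stabilizes every member. Lyapunov stability of each of the finitely many closed loops (take $\varepsilon=1$) gives a common $\delta>0$ such that $|x_0|\leq\delta$ implies $|x(t)|\leq 1$ for all $t$ and for both systems. I would partition $\mathbb{R}\setminus\{0\}$ into $S_+=\{x: k(x)\geq 0\}$ and $S_-=\{x:k(x)<0\}$. On $S_\pm$ the system $(2,\pm1)$ expands by a factor $\geq 2$ without changing sign. Convergence of the trajectory of $(2,1)$ therefore forces it to visit $S_-$ infinitely often, and symmetrically for $(2,-1)$. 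At a visit to $S_-$ the multiplier of $(2,1)$ is $2+k<2$, so the state is small only if $k\in(-3,-1)$. At that same state, the system $(2,-1)$ jumps by a factor $2-k\in(3,5)$. My intended contradiction is to exhibit an initial condition in $(0,\delta]$ whose trajectory under one fixed system escapes $[-1,1]$. For this I would choose $x_0$ from the set of points where the ``other'' system was just forced to contract, and iterate the doubling along $S_\pm$, using the uniformity of $\delta$ to restart the argument at every scale.

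The main obstacle is precisely this last step. The pointwise dichotomy only gives a one-step expansion for \emph{some} system, depending on the state. A static $g$ could in principle arrange $k$ so that each fixed system alternates between expanding and strongly contracting steps. If the two-element family is too weak for this reason, my fallback is to enlarge $\Sigma_\text{pk}$ while keeping it compact and stabilizable: for instance take all $(a,b)$ with $a\in[2,3]$ and $\tfrac12\leq|b|\leq1$. The multiplier set $\{a+bk(x)\}$ at each state is then an interval. This lets me prescribe the next state of some member exactly, for example $x(t+1)=x(t)$ or $x(t+1)=2x(t)$ with the same sign. Along such a trajectory the state never converges to $0$, which contradicts \ref{(P2)}-type global asymptotic stability.

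The delicate point is to ensure that the chosen member is the \emph{same} system at every step. I would address this by building the trajectory backwards from a fixed system. Alternatively, I would pass to a $2$-dimensional example with rotation-type $A$, so that a fixed system sweeps through all directions and cannot avoid its expanding sector. I would try the scalar family with the interval of $a$ first, since it keeps all computations one-dimensional.
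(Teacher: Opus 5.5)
Your proposal has a genuine gap, and your primary candidate is not a counterexample. The pointwise observation that some member at least doubles $|x|$ at every nonzero state is not an obstruction. For $\{(2,1),(2,-1)\}$, take $g(0)=0$, $g(x)=-2x$ if $\lfloor \log_4|x|\rfloor$ is even and $g(x)=2x$ if it is odd. Then $(2,1)$ sends ``even'' states to $0$, and sends ``odd'' states $x$ to $4x$, which is ``even''; $(2,-1)$ does the opposite. So every trajectory of both closed loops reaches $0$ within two steps with $|x(t)|\leq 4|x_0|$, and both are globally asymptotically stable. This is exactly the alternation you feared. The per-state dichotomy alone can therefore never be turned into a diverging trajectory of one fixed member, whatever use you make of the doubling or of the uniform $\delta$. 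Your fallbacks (building trajectories backwards, a rotation-type $2$-dimensional example) are not developed into an argument, so the proposal does not prove the theorem.

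The missing idea is to place the contradiction at a \emph{single} state for a \emph{single} system, using a dichotomy on $k(x)=g(x)/x$.
\begin{itemize}
\item Either $k$ avoids a certain set for all $x\neq 0$, and then one fixed member strictly expands everywhere.
\item Or $k$ hits that set at some $x_0\neq 0$, and then some member has $x_0$ as a \emph{fixed point}. Its trajectory from $x_0$ is constant, so the ``same system at every step'' issue never arises.
\end{itemize}
The paper uses $\{(0,b):|b|\leq 1\}\cup\{(2,1)\}$. If $|g(x)|<|x|$ for all $x\neq0$, then $(2,1)$ expands everywhere; otherwise $b=x_0/g(x_0)$ makes $x_0$ a fixed point of $(0,b)$.

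Your fallback family $\{(a,b):a\in[2,3],\ \tfrac12\leq|b|\leq 1\}$ does work by this reasoning. If $k(x)\notin[-3,-1]$ for all $x\neq 0$, then $(2,1)$ gives $|x(t+1)|=|2+k(x(t))|\,|x(t)|>|x(t)|$. Otherwise, for $k(x_0)\in[-3,-1]$, choose $b\in[\tfrac12,1]$ with $b|k(x_0)|\in[1,2]$ and set $a=1+b|k(x_0)|\in[2,3]$; then $ax_0+bg(x_0)=x_0$.

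Your stated reasons for this family are off, though. When $|k(x)|>1$, the multiplier set $\{a+bk(x)\}$ is a union of two disjoint intervals, not one interval. A member with multiplier $1$ exists only when $|k(x)|\in[1,4]$, which is why the complementary case must be handled separately.

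Finally, the theorem is stated for the paper's fixed $n,m$, so you must also embed the scalar example. For instance, let $A$ and $B$ have $a$ and $b$ as their $(1,1)$ entries and zeros elsewhere, and restrict to initial states along $e_1$, as the paper does.
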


As a consequence of Theorems \ref{th:dynamic} and \ref{thm: static state-feedback}, we have that for some uncertainty sets, robust stabilization requires a \emph{dynamic} feedback law. To prove Theorem \ref{thm: static state-feedback}, as an intermediate step, we first consider the scalar case $n=m=1$. 
\begin{lemma}
\label{lem: scalar static}
Suppose that $n=m=1$. Then, there exists a compact set $\Sigma_\text{s}\subset\Sigma_{\text{stab}}$ that does not enable robust stabilization by static state-feedback.
\end{lemma}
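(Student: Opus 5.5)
The plan is to take a fixed uncertainty set in which the drift coefficient $a$ ranges over an interval containing both $0$ and $1$, and the input coefficient $b$ takes both signs but stays away from zero. Concretely, I would choose
\begin{equation}
\Sigma_\text{s}\coloneqq[-2,2]\times\{-1,1\}\subset\mathbb{R}\times\mathbb{R}.
\end{equation}
This set is compact, being a product of compact sets. Every $(a,b)\in\Sigma_\text{s}$ has $b\neq0$, so $\rank\begin{bmatrix}a-\lambda & b\end{bmatrix}=1$ for all $\lambda$, and Lemma~\ref{lem:alpha_Hautus} with $\alpha=1$ gives $\Sigma_\text{s}\subset\Sigma_\text{stab}$. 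It then remains to show that for an \emph{arbitrary} $g:\mathbb{R}\to\mathbb{R}$ some member of $\Sigma_\text{s}$ has a closed loop \eqref{eq:def1} that is not globally asymptotically stable. No regularity of $g$ will be used.

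Fix an arbitrary $g$. For $x\neq0$ write $k(x)\coloneqq g(x)/x$, so that the closed loop of $(a,b)$ reads $x(t+1)=(a+b\,k(x(t)))\,x(t)$. I would split into two cases according to whether the gain $k$ is ever small.

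\emph{Case 1: there exists $\bar x\neq0$ with $|k(\bar x)|\leq1$.} Take $b=\sigma$, where $\sigma\coloneqq\operatorname{sign}(k(\bar x))$ and $\sigma\coloneqq1$ if $k(\bar x)=0$. Take $a=1-|k(\bar x)|$, which lies in $[0,1]\subset[-2,2]$. Then $a+bk(\bar x)=1$, so $\bar x$ is a nonzero equilibrium of the closed loop of $(a,b)\in\Sigma_\text{s}$. The trajectory from $x_0=\bar x$ therefore does not converge to zero.

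\emph{Case 2: $|g(x)|>|x|$ for all $x\neq0$.} Take the system $(0,1)\in\Sigma_\text{s}$, whose closed loop is $x(t+1)=g(x(t))$. From $x_0=1$ we get $|x(t+1)|>|x(t)|$ at every step, since every iterate is nonzero by induction. Hence $|x(t)|>1$ for all $t\geq1$ and there is again no convergence to zero.

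In both cases \eqref{eq:def1} fails to be globally asymptotically stable for some member of $\Sigma_\text{s}$. Since $g$ was arbitrary, $\Sigma_\text{s}$ does not enable robust stabilization by static state-feedback.

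The main obstacle is choosing $\Sigma_\text{s}$ correctly. A pointwise argument of the form ``at each $x$ some system expands'' does not work, because a static $g$ may act well on each individual trajectory. The dichotomy above avoids this: one fixed system either has a nonzero equilibrium (made possible by the interval of $a$ values together with both signs of $b$) or is expanding everywhere (made possible by $a=0$). Everything else is routine verification. I expect Theorem~\ref{thm: static state-feedback} to follow by embedding this scalar example, for instance in a block-diagonal fashion.
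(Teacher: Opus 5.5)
Your proof is correct and takes essentially the same approach as the paper. Both split on how $|g(x)|$ compares with $|x|$: in one branch a suitably tuned member of the set has a nonzero equilibrium, and in the other a fixed member has an expanding closed loop. The only difference is the witness set and which branch does what. The paper uses $\{(0,b) : |b|\le 1\}\cup\{(2,1)\}$, tuning $b$ to create the equilibrium when $|g(x_0)|\geq|x_0|$ for some $x_0\neq 0$, and using $(2,1)$ to expand when $|g(x)|<|x|$ everywhere. You instead tune $a$ and the sign of $b$ to get the equilibrium when the gain is at most one somewhere, and let $(0,1)$ do the expanding, so the roles of the two branches are mirrored.
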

\begin{proof}
Take the compact set of stabilizable systems as
\begin{equation}
\label{eqn: SISO}
\Sigma_\text{s}\coloneqq\set{ (a,b) }{a=0, |b| \leq 1 } \cup \{ (2,1) \}.
\end{equation} 
Let $g:\mathbb{R}\rightarrow \mathbb{R}$. We show that there exists $(a,b)\in\Sigma_\text{s}$ such that \eqref{eq:def1} is not globally asymptotically stable. First, suppose that $g$ satisfies
\begin{equation} 
\label{eqn: f monotone decreasing 1}
|g(x)| < |x|\ \text{ for all }\ x\in\mathbb{R}\backslash\{0\}.
\end{equation}  
Now, take $a=2$ and $b=1$. The closed-loop system is  
\begin{equation}
\label{eqn:a=2,b=1 scalar}
x(t+1) = 2 x(t) + g(x(t)).
\end{equation}
Let $x(0)\neq 0$. It follows from \eqref{eqn: f monotone decreasing 1} and \eqref{eqn:a=2,b=1 scalar} that
\begin{equation}
|x(t+1)| = |2x(t)+g(x(t))| \geq \big| 2|x(t)| - |g(x(t))|\big| > |x(t)|
\end{equation}
for all $t\in\mathbb{Z}_+$. This implies that \eqref{eqn:a=2,b=1 scalar} is not globally asymptotically stable.  Hence, a static state-feedback $g$ satisfying \eqref{eqn: f monotone decreasing 1} cannot stabilize all systems in the set $\Sigma_\text{s}$. Second, consider a static state-feedback $g$ that does not satisfy \eqref{eqn: f monotone decreasing 1}, i.e., there exists some $x_0\neq 0$ such that 
\begin{equation}
\label{eqn: f not monotone scalar}
|g(x_0)| \geq |x_0|.
\end{equation}
Consider $a=0$ and $b = \frac{x_0}{g(x_0)}$. Note that $|b|\leq 1$ holds by \eqref{eqn: f not monotone scalar}, thus, we have $(a,b)\in\Sigma_\text{s}$. The closed-loop system is then given by     
\begin{equation}\label{eqn:a=0,b=sth scalar}
x(t+1) = \tfrac{x_0}{g(x_0)}g(x(t)).
\end{equation}
Take $x(0)=x_0$ and observe that $x(1)= \frac{x_0}{g(x_0)} g(x_0) = x_0$. Hence, the state trajectory satisfies $x(t)=x_0$ for all $t\in\mathbb{Z}_+$.  This implies that \eqref{eqn:a=0,b=sth scalar} is not globally asymptotically stable. 
\end{proof}

The proof of Theorem~\ref{thm: static state-feedback} now follows from Lemma~\ref{lem: scalar static}.

\textit{Proof of Theorem~\ref{thm: static state-feedback}:} Define \mbox{$\mathcal{A}:\mathbb{R}\rightarrow\mathbb{R}^{n\times n}$} and \mbox{$\mathcal{B}:\mathbb{R}\rightarrow\mathbb{R}^{n\times m}$} as $\mathcal{A}(a)=\begin{bmatrix}
ae_1 \!&\! 0
\end{bmatrix}$ and $\mathcal{B}(b) =\begin{bmatrix}
be_1 \!&\! 0
\end{bmatrix}$, where $e_1\in\mathbb{R}^n$ is the vector with its first entry equal to $1$ and the the rest of entries equal to zero. Recall the definition of $\Sigma_\text{s}$ from \eqref{eqn: SISO}, and take $\Sigma_\text{pk}\coloneqq\set{ (\mathcal{A}(a),\mathcal{B}(b)) }{(a,b)\in\Sigma_\text{s}}$. Let $g:\mathbb{R}^n\rightarrow \mathbb{R}^n$. Define $x_1(t)\in\mathbb{R}$ and $g_1:\mathbb{R}\rightarrow \mathbb{R}$ as \mbox{$x_1(t)\coloneqq e_1^\top x(t)$} and $g_1(y)\coloneqq e_1^\top g(e_1 y)$. Observe from the closed-loop dynamics \eqref{eq:def1} that for every \mbox{$(A,B)\in\Sigma_\text{pk}$}, starting from \mbox{$x(0)=x_1(0)e_1$}, the state trajectory satisfies
\begin{equation}
\label{eqn:a,b}
x_1(t+1) = a x_1(t) + b g_1(x_1(t)).
\end{equation}
This implies that if the closed-loop system \eqref{eq:def1} is globally asymptotically stable for all $(A,B)\in\Sigma_\text{pk}$, then \eqref{eqn:a,b} is globally asymptotically stable for all $(a,b)\in\Sigma_\text{s}$. Based on Lemma~\ref{lem: scalar static}, this is not possible. Therefore, the set $\Sigma_\text{pk}$ does not enable robust stabilization by static state-feedback. \hfill \QED

\subsection{Fundamental limitation of linear dynamic state-feedback}
\label{sec:IV-lin}

In this section, we consider linear dynamic state-feedback laws of the form
\begin{equation} 
\label{eq:fd_lin_dyn}
\begin{split}
z(t+1)&=Kz(t)+Lx(t), \\
u(t)&=Mz(t) + Nx(t),
\end{split}
\end{equation}
where $p\!\in\!\mathbb{N}$, $K\!\in\!\mathbb{R}^{p\times p}$, $L\!\in\!\mathbb{R}^{p\times n}$, $M\!\in\!\mathbb{R}^{m\times p}$, and $N\!\in\!\mathbb{R}^{m\times n}$. The closed-loop system interconnecting \eqref{eq:1} and \eqref{eq:fd_lin_dyn} is
\begin{equation}
\label{eq:fd_lin_dyn_cl}
\begin{split}
x(t+1)&=(A+BN)x(t)+BMz(t),\\
z(t+1)&=Lx(t)+Kz(t).
\end{split}
\end{equation}
By $x(t,x_0,z_0)$ and $z(t,x_0,z_0)$, respectively, we denote the states of \eqref{eq:def2} resulting from the initial conditions $x(0)=x_0$ and $z(0)=z_0$.
We investigate whether for any compact set $\Sigma_\text{pk}\subseteq \Sigma_\text{stab}$ there exists a robust dynamic state-feedback that is linear. To formalize this, we consider a stronger version of Definition~\ref{def:robust dynamic stab} that requires the feedback to be linear. 

\begin{definition}
\label{def: linear dynamic stab}
We say that $\Sigma_\text{pk}$ \emph{enables robust stabilization by linear dynamic state-feedback} if there exists $p\in\mathbb{N}$, $K\in\mathbb{R}^{p\times p}$, $L\in\mathbb{R}^{p\times n}$, $M\in\mathbb{R}^{m\times p}$, and $N\in\mathbb{R}^{m\times n}$ such that the following statements hold:
\begin{enumerate}[label=(\alph*),ref=\ref{th:12}(\alph*)]
    \item System \eqref{eq:fd_lin_dyn_cl} is globally asymptotically stable with respect to $x$ for all $(A,B)\in\Sigma_\text{pk}$. 
    \item For every $x_0\in\mathbb{R}^n$, $z_0\in\mathbb{R}^p$, and $(A,B)\in\Sigma_\text{pk}$, the controller's state $\|z(t,x_0,z_0)\|$ is bounded. 
\end{enumerate}
\end{definition}

The following theorem shows that not all compact sets of stabilizable systems enable robust stabilization by linear dynamic state-feedback. Similar observations in the frequency-domain setting have already been made, e.g., in \cite{vidyasagar1986robust}. Nevertheless, for the sake of completeness, we provide a full proof of the result.

\begin{theorem}
\label{thm: dynamic linear}
There exists a compact set $\Sigma_\text{pk}\subseteq\Sigma_\text{stab}$ that does not enable robust stabilization by linear dynamic state-feedback. 
\end{theorem}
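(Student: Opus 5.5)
The plan is to reuse the scalar family from the proof of Lemma~\ref{lem: scalar static}, namely $\Sigma_\text{s}=\set{(a,b)}{a=0,\ |b|\leq 1}\cup\{(2,1)\}$, and then embed it into general dimensions exactly as in the proof of Theorem~\ref{thm: static state-feedback}. The embedding carries over directly. Starting from $x(0)\in\operatorname{span}\{e_1\}$, the first coordinate evolves as a scalar plant driven by a scalar linear dynamic controller, obtained by restricting the input to $e_1^\top$ and the measurement to $e_1$. It therefore suffices to treat $n=m=1$ and show that no $(K,L,M,N)$ satisfies (a) and (b) of Definition~\ref{def: linear dynamic stab} for all of $\Sigma_\text{s}$.

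\emph{Step 1: constrain $K$ using $b=0$.} With $(a,b)=(0,0)$ we have $x(t)=0$ for $t\geq 1$, hence $z(t+1)=Kz(t)$ for $t\geq1$. Boundedness of $z$ for all $z_0$ then forces every eigenvalue of $K$ to lie in the closed unit disk, with those on the unit circle semisimple. In particular $d(s):=\det(sI-K)$ has no real roots in $(1,\infty)$.

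\emph{Step 2: frequency-domain reformulation.} Let $C(s)=N+M(sI-K)^{-1}L=n(s)/d(s)$. The closed-loop characteristic polynomial for plant $(a,b)$ is $\chi_{a,b}(s)=(s-a)d(s)-b\,n(s)$. Combining (a) and (b), the whole closed-loop state $(x,z)$ is bounded and $x\to0$. I would argue that any root $\lambda$ of $\chi_{a,b}$ with $|\lambda|\geq 1$ corresponding to an eigenvector with nonzero $x$-component contradicts this. Indeed, taking that eigenvector (or its real part) as initial condition gives $x(t)=\lambda^t x_0$, which does not converge to zero.

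\emph{Step 3: a real-axis sign argument.} For real $s>1$, $C$ is continuous by Step~1. For the plant $(2,1)$, stability requires $s-2-C(s)\neq0$ on $(1,\infty)$. As $s\to\infty$ we have $s-2-C(s)\to+\infty$, so $s-2-C(s)>0$, i.e.\ $C(s)<s-2$, for all $s>1$. Choosing $s_0$ slightly larger than $1$ gives $C(s_0)<s_0-2<-1+\varepsilon$, so $|C(s_0)|>s_0$ for $s_0$ close enough to $1$. Then $b:=s_0/C(s_0)$ satisfies $|b|<1$, and $s_0-b\,C(s_0)=0$. Hence the plant $(0,b)\in\Sigma_\text{s}$ has a closed-loop pole at $s_0>1$, contradicting robust stability. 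Working at $s_0>1$ instead of $s=1$ sidesteps a possible pole of $C$ at $1$.

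\emph{Main obstacle.} The delicate point is Step~2, namely that the root $s_0$ really appears in the $x$-trajectory rather than as a hidden mode. Hidden modes cannot be the issue here. If the eigenvector at $s_0$ had zero $x$-component, we would get $Kz=s_0z$ with $s_0>1$, contradicting Step~1. Likewise, if $d(s_0)\neq0$, the root of $\chi_{0,b}$ at $s_0$ is a genuine root of $s-bC(s)$. For the $(2,1)$ case I would use the same observation: any root in $(1,\infty)$ of $(s-2)d(s)-n(s)$ is not a root of $d$, so it produces a closed-loop eigenvector with nonzero $x$-part. This is what justifies the sign claim in Step~3.
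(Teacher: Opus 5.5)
Your real-axis argument on $\Sigma_\text{s}$ is a different route from the paper's, which applies a trace argument to the two-point set $\{(0,\hat B),(\hat A,\hat B)\}$ with $\tr(\hat A)>2n$. Steps 1--2 are sound, and so is the sign conclusion $C(s)<s-2$ on $(1,\infty)$. The gap is the next inference. From $C(s_0)<s_0-2$ you only get $|C(s_0)|>2-s_0$, and $2-s_0<s_0$ for every $s_0>1$. For $b=s_0/C(s_0)$ to satisfy $|b|\leq 1$ you need $C(s_0)\leq -s_0$, which does not follow.

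Choosing $s_0$ more cleverly cannot fix this. The static gain $C\equiv-1$ (i.e.\ $M=0$, $N=-1$) satisfies $C(s)<s-2$ for all $s>1$, yet $|C(s)|=1<s$ there. So no loop with $(0,b)$, $|b|\leq 1$, has a pole in $(1,\infty)$. With the plant $(2,1)$, the constraints $-C(s)>2-s$ and $-C(s)\geq s$ meet exactly at $s=1$. The contradiction therefore sits at the boundary point you deliberately avoided: for $C\equiv-1$, the loops for $(2,1)$ and $(0,-1)$ both have a pole exactly at $1$.

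The cleanest repair is to replace $(2,1)$ by $(a,1)$ with some $a>2$, e.g.\ $(3,1)$. The set stays compact and stabilizable, and the embedding is unchanged. Your sign argument now gives $C(s)<s-a$ on $(1,\infty)$, hence $|C(s_0)|>a-s_0>s_0$ for $s_0\in(1,a/2)$. Then $b=s_0/C(s_0)\in(-1,0)$ yields a closed-loop eigenvalue $s_0>1$ whose eigenvector has nonzero $x$-part by Step 1, exactly as in your last paragraph.

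If you keep $(2,1)$, you must treat $s=1$ explicitly:
\begin{itemize}
\item If $1\notin\spec(K)$, then $C$ is continuous at $1$ and letting $s\downarrow 1$ gives $C(1)\leq -1$. Taking $b=1/C(1)$ gives a real eigenvector of the closed loop for eigenvalue $1$ with $v_x\neq 0$, so $x(t)\equiv v_x$ does not converge.
\item If $C$ has a pole at $1$, then $C(s)\to-\infty$ as $s\downarrow 1$, and your $s_0>1$ argument works.
\item If $C$ extends continuously to $1$ despite $1\in\spec(K)$, then $C(1)<-1$ again lets you take $s_0>1$. Only the case $C(1)=-1$ needs a multiplicity (generalized-eigenvector) argument.
\end{itemize}

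Once repaired, your approach has one advantage: it does not depend on the controller order $p$. The paper's trace argument needs care on exactly this point, since the eigenvalue bound for the $(n+p)\times(n+p)$ matrix $F$ gives $|\tr F|\leq n+p$ rather than $n$.
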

\begin{proof}
Let $(\hat{A},\hat{B})\in\Sigma_\text{stab}$ be such that $\tr(\hat{A})>2n$. 
Take the compact set of stabilizable systems as $\Sigma_\text{pk} \coloneqq \{(0,\hat{B}),(\hat{A},\hat{B}) \}$.
We show that no linear dynamic state-feedback simultaneously stabilizes both systems in the set $\Sigma_\text{pk}$. Aiming for a contradiction, suppose that $K$, $L$, $M$, and $N$ are given such that the closed-loop system \eqref{eq:fd_lin_dyn_cl} satisfies conditions (a) and (b) in Definition~\ref{def: linear dynamic stab} for all $(A,B)\in\Sigma_\text{pk}$. We define $F(A,B)\coloneqq\begin{bmatrix}
A+BN & BM \\ L & K
\end{bmatrix}$. Since the closed-loop system is linear, conditions (a) and (b) in Definition~\ref{def: linear dynamic stab} imply that the trajectories of the state $x(t,x_0,z_0)$ and the controller's state $z(t,x_0,z_0)$ are bounded for all \mbox{$x_0\in\mathbb{R}^n$}, $z_0\in\mathbb{R}^n$, and $t\in\mathbb{Z}_+$. This implies that the eigenvalues of both matrices $F(0,\hat{B})$ and $F(\hat{A},\hat{B})$ lie inside or on the boundary of the unit disk, i.e., every \mbox{$\lambda\in\spec(F(0,\hat{B}))\cup \spec(F(\hat{A},\hat{B}))$} satisfies $|\lambda|\leq 1$. Since the trace of a matrix is equal to the sum of its eigenvalues, we have $| \tr(F(0,\hat{B}))| \leq n$ and $|\tr(F(\hat{A},\hat{B})) | \leq n$. Thus, we have $|\tr(\hat{A}+\hat{B}N)+\tr(K)|\leq n$ and $|\tr(\hat{B}N)+\tr(K)|\leq n$. These inequalities contradict $\tr(\hat{A})>2n$. Therefore, the set $\Sigma_\text{pk}$ does not enable robust stabilization by linear dynamic state-feedback. 
\end{proof}

Theorems~\ref{thm: static state-feedback} and~\ref{thm: dynamic linear} show that to simultaneously stabilize all systems within an arbitrary compact set of stabilizable systems, one must consider feedback laws that are both dynamic and nonlinear. 

%% file: design.tex
\section{Polytopic Set Covering}
\label{sec:VI}
It was shown in Section~\ref{sec:III} that if $\Sigma_\text{pk}$ enables robust \mbox{$\alpha$--exponential} stabilization, a feedback law as in \eqref{eq:fun_fg} simultaneously stabilizes all systems in $\Sigma_\text{pk}$ with an exponential decay rate strictly less than $\alpha$. A key step towards designing such a feedback law is to cover the set $\Sigma_\text{pk}$ by a finite number of sets that are, individually, \mbox{$\alpha$--quadratically} stabilizable. That is, the problem is to find $q\in\mathbb{N}$, and $K_i$ and $P_i>0$ for $i\in\{1,\ldots,q\}$, such that for every $(A,B)\in\Sigma_\text{pk}$ we have $\alpha^2 P_i-(A+BK_i)^\top P_i(A+BK_i)\geq I$ for some $i\in\{1,\ldots,q\}$. In this section, we focus on polytopic sets and provide an algorithm by which one can compute $q\in\mathbb{N}$, and $K_i$ and $P_i>0$ for $i\in\{1,\ldots,q\}$, satisfying the above properties\footnote{The results of this section can be extended to a union of polytopic sets.}. 

Consider polytopic sets with an affine parametrization of the form
\begin{equation}
\label{eq:sigma_aff}
\Sigma_\text{pk}=\set{(\mathcal{A}(\theta),\mathcal{B}(\theta))}{\theta_\text{min}\preceq\theta\preceq \theta_\text{max}},
\end{equation}
where $\theta_\text{min},\theta_\text{max}\in\mathbb{R}^\ell$ with $\theta_\text{min}\preceq \theta_\text{max}$. Moreover, the pair \mbox{$(\mathcal{A}(\theta),\mathcal{B}(\theta)):\mathbb{R}^\ell\rightarrow \Sigma$} is defined as
\begin{equation}
\label{eq:aff}
\mathcal{A}(\theta)\coloneqq A_0+\sum_{i=1}^\ell\theta_iA_i\ \text{ and }\ \mathcal{B}(\theta)\coloneqq B_0+\sum_{i=1}^\ell\theta_iB_i,
\end{equation}
with $\theta_i$ denoting the $i$th entry of $\theta$. The integer $\ell$, the pairs $(A_i,B_i)$ for $i\in\{0,\ldots,\ell\}$, and the vectors $\theta_\text{min},\theta_\text{max}$ are given parameters. This is a common class of uncertainty sets studied in robust control (e.g., see \cite{feron2002analysis,scherer1999lecture,amato2002note}). We note that any compact set can be over-approximated by a polytopic set of this form. In this formulation, the parameter $\theta$ represents the system uncertainties. This parameter belongs to the hyper-rectangle defined by $\theta_\text{min}\preceq\theta\preceq\theta_\text{max}$. We denote the set of vertices of this hyper-rectangle by \mbox{$\mathcal{V}(\theta_\text{min},\theta_\text{max})\subset\mathbb{R}^\ell$}, which has $2^\ell$ elements. 

For this class of $\Sigma_\text{pk}$, the following lemma provides a full characterization of their $\alpha$--quadratic stabilizability. This result extends \cite[Thm. 1]{amato2002note} by taking the parameter $\alpha$ into account. 



\begin{proposition}
\label{prop:lmi_rectangle_our}
Let $\alpha\in[0,1)$. Suppose that $\Sigma_\text{pk}$ is given by \eqref{eq:sigma_aff} and it satisfies $\Sigma_\text{pk}\subset\Sigma_\text{stab}(\alpha)$. Then, $\Sigma_\text{pk}$ is \mbox{$\alpha$--quadratically} stabilizable \emph{if and only if} there exists $Q>0$ and $L$ such that
\begin{equation}
\label{eqn: LMI vertex}
\begin{bmatrix}
\alpha^2 Q & (\mathcal{A}(\theta)Q+\mathcal{B}(\theta)L)^\top & Q \\
\mathcal{A}(\theta)Q+\mathcal{B}(\theta)L & Q & 0 \\
Q & 0 & I
\end{bmatrix}\geq0
\end{equation}
for all $\theta\in\mathcal{V}(\theta_\text{min},\theta_\text{max})$. Moreover, if \eqref{eqn: LMI vertex} holds, then \mbox{$K=LQ^{-1}$} and $P=Q^{-1}$ are such that for every \mbox{$(A,B)\in\Sigma_\text{pk}$} we have \mbox{$\alpha^2 P-(A+BK)^\top P(A+BK)\geq I$}.
\end{proposition}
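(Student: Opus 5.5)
The plan is to reduce both directions to the nonstrict characterization of Lemma~\ref{lem:QS_compact} and then pass between vertices and the whole polytope using convexity in $\theta$. First observe that $\Sigma_\text{pk}$ in \eqref{eq:sigma_aff} is compact, being the image of a hyper-rectangle under the continuous affine map \eqref{eq:aff}. Hence, by Lemma~\ref{lem:QS_compact}, $\Sigma_\text{pk}$ is $\alpha$--quadratically stabilizable if and only if there exist $P\geq I$ and $K$ with
\begin{equation}
\alpha^2 P-(A+BK)^\top P(A+BK)\geq I \quad\text{for all } (A,B)\in\Sigma_\text{pk}.
\end{equation}
The key is to show that, after the change of variables $Q=P^{-1}$ and $L=KQ$, this inequality is equivalent to \eqref{eqn: LMI vertex} at the same $\theta$.

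For that equivalence, I will congruence-transform the inequality by $Q$ on both sides. This gives $\alpha^2 Q-(\mathcal{A}(\theta)Q+\mathcal{B}(\theta)L)^\top Q^{-1}(\mathcal{A}(\theta)Q+\mathcal{B}(\theta)L)-Q^2\geq 0$. This expression is exactly the Schur complement of the block $\mathrm{diag}(Q,I)>0$ in the $3\times 3$ block matrix of \eqref{eqn: LMI vertex}. The nonstrict Schur complement lemma is valid here because the lower-right block is positive definite. So for fixed $\theta$, \eqref{eqn: LMI vertex} holds if and only if $\alpha^2P-(\mathcal{A}(\theta)+\mathcal{B}(\theta)K)^\top P(\mathcal{A}(\theta)+\mathcal{B}(\theta)K)\geq I$, with $P=Q^{-1}$ and $K=LQ^{-1}$.

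The "only if" direction is then immediate. Take $P\geq I$ and $K$ from Lemma~\ref{lem:QS_compact}, set $Q=P^{-1}>0$ and $L=KQ$, and restrict to vertices, which lie in $\Sigma_\text{pk}$.

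For the "if" direction and the final claim, note that the block matrix in \eqref{eqn: LMI vertex} is affine in $\theta$ for fixed $Q,L$, because $\mathcal{A}$ and $\mathcal{B}$ are affine. Every $\theta$ with $\theta_\text{min}\preceq\theta\preceq\theta_\text{max}$ is a convex combination of the vertices in $\mathcal{V}(\theta_\text{min},\theta_\text{max})$. Taking the same convex combination of the vertex inequalities gives \eqref{eqn: LMI vertex} at $\theta$, since the PSD cone is convex. The equivalence above then yields $\alpha^2P-(A+BK)^\top P(A+BK)\geq I$ for every $(A,B)\in\Sigma_\text{pk}$. Since $P>0$, this also gives the strict inequality \eqref{eq:lyap}, so $\Sigma_\text{pk}$ is $\alpha$--quadratically stabilizable.

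The main point needing care is the Schur complement step with nonstrict inequalities. I will handle it by using positive definiteness of $\mathrm{diag}(Q,I)$ and writing out the congruence explicitly. Note also that the "if" direction uses only $P>0$ and does not need $P\geq I$.
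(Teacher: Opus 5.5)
Your proposal is correct and follows essentially the same route as the paper's proof. Both reduce to the nonstrict form via Lemma~\ref{lem:QS_compact}, substitute $Q=P^{-1}$, $L=KQ$, use the congruence by $Q$ together with the Schur complement on the positive definite block $\mathrm{diag}(Q,I)$, and pass from the vertices to the whole hyper-rectangle by affineness in $\theta$ and convexity of the positive semidefinite cone. Your handling of general convex combinations and of the ``moreover'' claim is slightly more explicit than the paper's, though the strict inequality \eqref{eq:lyap} comes from the bound $\geq I$, not from $P>0$.
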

\begin{proof}
Define $\Theta\coloneqq\set{\theta\in\mathbb{R}^\ell}{\theta_\text{min}\preceq\theta\preceq\theta_\text{max}}$. It follows from Lemma~\ref{lem:QS_compact} that $\Sigma_\text{pk}$ is $\alpha$--quadratically stabilizable if and only if there exists $P\geq I$ and $K$ such that 
\begin{equation}
\label{eqn: ineq vertex_2}
\alpha^2 P-(\mathcal{A}(\theta)+\mathcal{B}(\theta)K)^\top P(\mathcal{A}(\theta)+\mathcal{B}(\theta)K)\geq I
\end{equation}
for all $\theta\in\Theta$. Let $Q=P^{-1}>0$ and \mbox{$L=KQ$}. Multiply \eqref{eqn: ineq vertex_2} from left and right by $Q$ to obtain
\begin{equation}
\label{eqn: ineq vertex_3}
\alpha^2 Q-(\mathcal{A}(\theta)Q+\mathcal{B}(\theta)L)^\top Q^{-1}(\mathcal{A}(\theta)Q+\mathcal{B}(\theta)L)\geq Q^2.
\end{equation}
One can verify using a Schur complement argument that \eqref{eqn: ineq vertex_3} is equivalent to \eqref{eqn: LMI vertex}. Therefore, $\Sigma_\text{pk}$ is \mbox{$\alpha$--quadratically} stabilizable if and only if there exists $Q>0$ and $L$ such that \eqref{eqn: LMI vertex} holds for all \mbox{$\theta\in\Theta$}. Now, the ``only if'' part of Proposition~\ref{prop:lmi_rectangle_our} is obvious since if \eqref{eqn: LMI vertex} holds for all $\theta\in\Theta$ then it also hold for all  $\theta\in\mathcal{V}(\theta_\text{min},\theta_\text{max})\subset\Theta$. For the ``if'' part, observe that $\Theta$ is the convex hull of $\mathcal{V}(\theta_\text{min},\theta_\text{max})$. One can verify that if \eqref{eqn: LMI vertex} holds for $\theta_1,\theta_2\in\mathcal{V}(\theta_\text{min},\theta_\text{max})$, then it also holds for $\theta=\beta\theta_1+(1-\beta)\theta_2$ with any $\beta\in[0,1]$ due to \eqref{eq:aff}. Therefore, \eqref{eqn: LMI vertex} holds for all $\theta\in\Theta$. 
\end{proof}


Given $\Sigma_\text{pk}$ of the form \eqref{eq:sigma_aff}, 
we leverage LMI~\eqref{eqn: LMI vertex} to cover $\Sigma_\text{pk}$ by finitely many polytopic sets that are, individually, $\alpha$--quadratically stabilizable. This can be accomplished using Algorithm~\ref{alg:1} presented next.

\begin{algorithm}[ht]
\caption{Polytopic set covering}
\label{alg:1}
\begin{algorithmic}[1]
\Require{$\theta_\text{min}$, $\theta_\text{max}$, and $\alpha$}
\State{$j\gets 1$, $q\gets 0$, $\theta_\text{min}^{(1)}\gets\theta_\text{min}$, $\theta_\text{max}^{(1)}\gets\theta_\text{max}$, and $\mathcal{I}\gets\varnothing$} \label{step1}
\While{$q<j$}
   \State{$q=j$}
   \For{$i\in\{1,\ldots,q\}\backslash\mathcal{I}$}
   
   \State{Let $\mathcal{V}_i=\mathcal{V}(\theta_\text{min}^{(i)},\theta_\text{max}^{(i)})$.}
   \If{$Q>0$ and $L$ satisfy \eqref{eqn: LMI vertex} for all \mbox{$\theta\in\mathcal{V}_i$}}\label{alg:1_step1}
   \State{$P_i\gets Q^{-1}$, $K_i\gets LQ^{-1}$, and $\mathcal{I}\gets \mathcal{I}\cup\{i\}$}
   \Else
   \State{$\hat{\theta}\gets\frac{1}{2}(\theta_\text{min}^{(i)}+\theta_\text{max}^{(i)})$ and $\theta_\text{max}^{(j+1)}\gets \theta_\text{max}^{(i)}$}\label{step15}
   \State{Let $k\in\arg\max_\kappa (\theta_{\text{max},\kappa}^{(i)}-\hat{\theta}_\kappa)$}
   \State{$\theta_{\text{min},\kappa}^{(j+1)}\gets \theta_{\text{min},\kappa}^{(i)}$ if $\kappa\neq k$ and $\hat{\theta}_\kappa$ if $\kappa=k$}
   \State{$\theta_{\text{max},k}^{(i)}\gets \hat{\theta}_k$ and $j\gets j+1$}\label{step20}
   \EndIf
   \EndFor
\EndWhile
\Ensure{$q$, and $K_i,P_i$ for all $i\in\{1,\ldots,q\}$}
\end{algorithmic}
\end{algorithm}

Algorithm~\ref{alg:1} works as follows.  
We start by considering the polytopic set of the form~\eqref{eq:sigma_aff} as a whole.
We then proceed to check if this set is $\alpha$--quadratically stabilizable by verifying if there exist $Q>0$ and $L$ which satisfy \eqref{eqn: LMI vertex} for all its vertices. If so, the algorithm terminates. If not, we divide the set in half along its longest edge. Next, those two subsets are considered separately. If either one of the subsets is $\alpha$--quadratically stabilizable, we store its label in the set $\mathcal{I}$ and no longer consider it in the remainder of the algorithm's iterations.
For the remaining subset(s), that is, the subsets whose label is not in $\mathcal{I}$, we continue the same procedure until all the subsets are $\alpha$-quadratically stabilizable. It is easy to see that this algorithm will terminate within a finite number of steps as long as $\rho_\alpha(\Sigma_\text{pk})>0$, which is the case if $\Sigma_\text{pk}\subset\Sigma_\text{stab}(\alpha)$ and $\Sigma_\text{pk}$ is compact (see the proof of Lemma~\ref{lem:finite covering quadratic stab sets}).

%% file: examples.tex
\section{Numerical Example} \label{sec: examples}
Consider the two-wheeled inverted pendulum mobile platform\footnote{The simulation code is available at https://github.com/a-shakouri/robust-control-using-nonlinear-feedback.} in Fig~\ref{fig:twowheel}. The state variables $x_1(t)$, $x_3(t)$, and $x_5(t)$ denote the platform's position, pitch angle, and yaw angle, respectively. The rest of the state variables $x_2(t)$, $x_4(t)$, and $x_6(t)$ denote the rate of change of $x_1(t)$, $x_3(t)$, and $x_5(t)$, respectively. The control inputs $u_1(t)$ and $u_2(t)$ are the torques applied to the platform by each of the wheels. The linearized state-space equations of this system around the unstable upright position are provided in \cite[Sec. III-B]{kim2017nonlinear}. We discretize this dynamics using the forward Euler method with a time step $0.01$. The nominal system is 
\begin{equation}
A_0=\begin{bmatrix}
1 & 0.01 & 0 & 0 & 0 & 0 \\
0 & 1 & a_1 & 0 & 0 & 0 \\
0 & 0 & 1 & 0.01 & 0 & 0 \\
0 & 0 & a_2 & 1 & 0 & 0 \\
0 & 0 & 0 & 0 & 1 & 0.01 \\
0 & 0 & 0 & 0 & 0 & 1
\end{bmatrix},\ B_0=\begin{bmatrix}
0 & 0 \\
b_1 & b_1 \\
0  &  0 \\
b_2 & b_2 \\
0 & 0 \\
b_3 & -b_3 
\end{bmatrix},
\end{equation}
with parameters $a_1=-0.0332$, $a_2=0.2732$, $b_1=0.0019$, \mbox{$b_2=-0.0073$}, and $b_3=-0.0085$. We assume that the true values of these parameters can differ up to $\pm 5\%$ of their nominal values. To capture this uncertainty, we consider $\Sigma_\text{pk}$ in the form of \eqref{eq:sigma_aff} with $\ell=5$, $A_3=A_4=A_5=0$, $A_1=\begin{bmatrix}
0_{6\times 2} & e_2 & 0_{6\times 3}\end{bmatrix}$, \mbox{$A_2=\begin{bmatrix}
0_{6\times 2} & e_4 & 0_{6\times 3}\end{bmatrix}$}, $B_1=B_2=0$, $B_3=\begin{bmatrix}
e_2 & e_2
\end{bmatrix}$, \mbox{$B_4=\begin{bmatrix}
e_4 & e_4
\end{bmatrix}$}, and $B_5=\begin{bmatrix}
e_6 & -e_6
\end{bmatrix}$. Here $e_i\in\mathbb{R}^6$ denotes the vector whose $i$th entry is 1 and all other entries are zero. Moreover, we set $\theta_\text{max}=0.05\begin{bmatrix}
-a_1 \!&\! a_2 \!&\! b_1 \!&\! -b_2 \!&\! -b_3
\end{bmatrix}^\top$ and $\theta_\text{min}=-\theta_\text{max}$. 

\begin{figure}
    \centering
    \begin{tikzpicture}[scale=0.39]
    \newcommand{\rod}[3]{%
    \begin{scope}[shift={(#2,#3)}, rotate=#1]
    \fill[gray!50,rotate=-90] (-0.5,0.6) rectangle (0.5,6.53);
    \fill[gray!50,rotate=-90] (0,0.6) ellipse (0.5 and 0.2);
    \draw[,rotate=-90] (-0.5,0.6) -- (-0.5,6.53);
    \draw[,rotate=-90] (0.5,0.6) -- (0.5,6.53);
    \draw[,rotate=-90] (-0.5,0.6) arc (180:360:0.5 and 0.2); 
    \end{scope}
    }
    \newcommand{\wheel}[3]{%
    \begin{scope}[shift={(#2,#3)}, rotate=#1]
    \fill[gray!80,rotate=-90] (-1.5,0) rectangle (1.5,0.5);
    \fill[gray!80,rotate=-90] (0,0) ellipse (1.5 and 0.5);
    \filldraw[fill=gray!30, draw=black,rotate=-90] (0,0.5) ellipse (1.5 and 0.5);
    \draw[,rotate=-90] (-1.5,0) -- (-1.5,0.5);
    \draw[,rotate=-90] (1.5,0) -- (1.5,0.5);
    \draw[,rotate=-90] (-1.5,0) arc (180:360:1.5 and 0.5); 
    \end{scope}
    }
    \rod{250}{6}{6}
    \wheel{0}{0}{0}
    \rod{0}{0}{0}
    \wheel{0}{7}{0}
    \draw[{latex}-,rotate=90] (0,-0.8) arc (-90:90:1 and 0.3); 
    \node at (1.5,1) {$u_2$};
    \draw[{latex}-,rotate=90] (0,-7.8) arc (-90:90:1 and 0.3); 
    \node at (8.55,1) {$u_1$};
    \draw[dashed] (4,0.5)--(4,6); 
    \draw[-{latex}] (4.35,5) arc (-60:240:0.8 and 0.3); 
    \node at (3.5,4.5) {$x_5$};
    \draw[-{latex},rotate=150] (-2,-4.6) arc (0:-90:0.8 and 0.3); 
    \node at (3.5,3) {$x_3$};
    \draw[dashed] (4,0.5) -- ($(4,0.5)!1.5!(4.9,3)$);
    \draw[dashed] (4,0.5) -- ($(4,0.5)!1.25!(8,2)$);
    \draw[{latex}-] (8,2) -- ($(4,0.5)!0.5!(8,2)$);
    \node at (7,2.1) {$x_1$};
\end{tikzpicture}
    \caption{Schematic of a two-wheeled inverted pendulum.}
    \label{fig:twowheel}
\end{figure}

We let $\alpha=0.9$ and use Algorithm~\ref{alg:1} to cover $\Sigma_\text{pk}$ by \mbox{$q=108$} subsets. We apply the feedback law \eqref{eq:fun_fg} to simultaneously stabilize all systems within $\Sigma_\text{pk}$. Figure~\ref{fig:numerical ex} shows the closed-loop trajectory of the particular system $(\mathcal{A}(\theta),\mathcal{B}(\theta))$ with \mbox{$\theta = \theta_\text{min} + \tfrac{1}{10}(\theta_\text{max}-\theta_\text{min})$}.
The initial conditions are
$$x(0) =0.01\begin{bmatrix}
    -2 \!&\! 0.5 \!&\! 1 \!&\! 3 \!&\! 0.2 \!&\! 7
\end{bmatrix}^\top \text{ and } z(0)=\begin{bmatrix}
0 \!&\! 0
\end{bmatrix}^\top.$$
From the bottom plot of Fig.~\ref{fig:numerical ex}, one can see that $23$ subsets were explored and falsified by the feedback law before it settled on \mbox{$z_2(t)=24$} for all $t\geq3.1$. Now, based on the history of $z_2(t)$ from $t=0$ to $t=3.1$, one can tell that the true system $(\mathcal{A}(\theta),\mathcal{B}(\theta))$ does not belong to the subsets of $\Sigma_\text{pk}$ that are labeled $1$ to $23$ in the set covering algorithm. Depending on the true system and the initial conditions, more or fewer subsets can be falsified. 

\begin{figure}[h]
    \centering
    \includegraphics[width=1\linewidth]{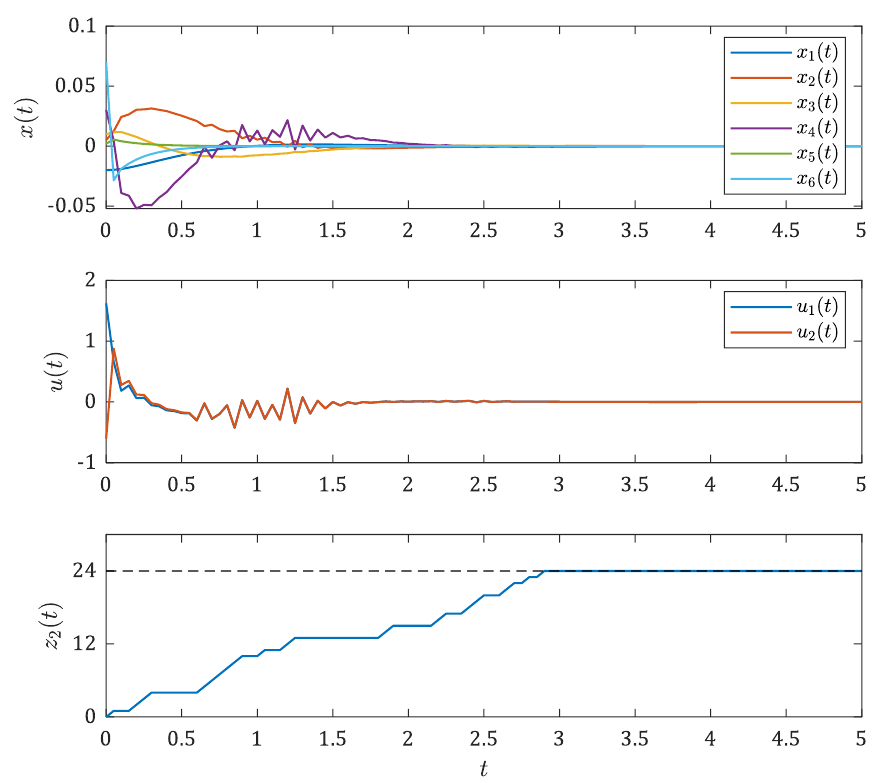}
    \caption{Closed-loop trajectory for the two-wheeled robot.}
    \label{fig:numerical ex}
\end{figure}

%% file: conclusion.tex
\section{Conclusion} \label{sec: conclusion}
In this paper, we have studied the robust stabilization of compact sets of linear time-invariant systems. We have shown that, in general, robust stabilization \emph{requires} a state-feedback that is both nonlinear and dynamic. Moreover, we have shown that, for \emph{any} compact set of stabilizable systems, one can find a (time-invariant) nonlinear dynamic state-feedback that simultaneously stabilizes all systems within the set. 
In addition to asymptotic stabilization, we have also investigated exponential stabilization with a given rate of decay. 


In this work, we studied robust stabilization by only considering uncertainties in the system parameters. In practice, the system might face several types of uncertainty, e.g., process noise. In particular, the study of input-to-state stability of such robust dynamic feedback laws with respect to an exogenous noise signal is an interesting topic for future work. 

%% file: appendix.tex
\appendix

\subsection{Proof of Lemma~\ref{lem:alpha_Hautus}} \label{app:proof of alpha_Hautus}
(a)$\Rightarrow$(b): We use contraposition. Suppose that (b) does not hold. Then, there exist nonzero $v\in\mathbb{C}^n$ and $\lambda\in\mathbb{C}$ with $|\lambda|\geq\alpha$ such that $v^*B=0$ and $v^*A=\lambda v^*$. Let $K\in\mathbb{R}^{m\times n}$. We have that $v^* (A+BK)=\lambda v^*$. Therefore, \mbox{$\lambda\in\spec (A+BK)$} satisfies $|\lambda|\geq\alpha$. Since this argument holds for all \mbox{$K\in\mathbb{R}^{m\times n}$}, the pair $(A,B)$ is not $\alpha$--stabilizable. Therefore, (a) does not hold. (b)$\Rightarrow$(c): Suppose that (b) holds, i.e., every eigenvalue of $A$ that is not $(A,B)$--controllable has modulus strictly less than $\alpha$. It follows from \cite[Thm. 3.32]{trentelmancontrol} that there exists $K\in\mathbb{R}^{m\times n}$ such that every member of $\spec (A+BK)$ has modulus strictly less than $\alpha$. This implies that $\frac{1}{\alpha}(A+BK)$ is Schur. Thus, there exists $P>0$ such that the Lyapunov inequality \mbox{$P-\frac{1}{\alpha^2}(A+BK)^\top P(A+BK) >0$} is satisfied. Multiplying this inequality by $\alpha^2$ gives \eqref{eq:lyap}. (c)$\Rightarrow$(a): Suppose that (c) holds. Let $P>0$ and $K$ satisfy \eqref{eq:lyap}. Let $v\in\mathbb{C}^n\backslash\{0\}$ and $\lambda\in\mathbb{C}$ be such that $v^*(A+BK)=\lambda v^*$. Multiply \eqref{eq:lyap} from left and right, respectively, by $v^*$ and $v$ to obtain $(\alpha^2-|\lambda|^2)v^* Pv>0$. Since $v^* Pv>0$, this implies that $|\lambda|<\alpha$. Therefore, $(A,B)$ is $\alpha$--stabilizable. \hfill \QED

\subsection{Proof of Lemma~\ref{lem:QS_compact}} \label{app:proof of QS_compact}
The ``if'' part is obvious since \eqref{eq:Lyap_P^2} and \mbox{$P\geq I$} imply~\eqref{eq:lyap}. For the ``only if'' part, suppose that $\mathcal{S}$ is \mbox{$\alpha$--quadratically} stabilizable. Let $\bar{P}>0$ and $K$ be such that \mbox{$\alpha^2 \bar{P}-(A+BK)^\top \bar{P}(A+BK) >0$} holds for all \mbox{$(A,B)\in\mathcal{S}$}. We claim that there exists a sufficiently small $\gamma>0$ such that for every $(A,B)\in\mathcal{S}$ we have \mbox{$\alpha^2\bar{P}-(A+BK)^\top \bar{P}(A+BK)\geq \gamma I$}.
To prove this claim, suppose on the contrary that such a $\gamma$ does not exist. Thus, there exists a sequence $(A_i,B_i)_{i=0}^\infty$ in $\mathcal{S}$ such that
\begin{equation}
\label{eq:lim_alpha_Lyap}
\lim_{i\rightarrow\infty}\lambda_\text{min}(\alpha^2 \bar{P}-(A_i+B_iK)^\top \bar{P}(A_i+B_iK))=0.
\end{equation}
Since $\mathcal{S}$ is compact, it follows from the Bolzano–Weierstrass theorem that $(A_i,B_i)_{i=0}^\infty$ has a convergent subsequence with limit, say, $(A_*,B_*)\in\mathcal{S}$. Since the smallest eigenvalue of a matrix is a continuous function of its entries (see \cite[pp. 125--126]{kato2012short}), \eqref{eq:lim_alpha_Lyap} implies that $\lambda_\text{min}(\alpha^2 \bar{P}-(A_*+B_*K)^\top \bar{P}(A_*+B_*K))=0$.
This contradicts the assumption that every $(A,B)\in\mathcal{S}$ satisfies \mbox{$\alpha^2 \bar{P}-(A+BK)^\top \bar{P}(A+BK) >0$}. Therefore, the claim holds. Now, take $P=\frac{1}{\gamma} \bar{P}$ and verify that \eqref{eq:Lyap_P^2} holds. Here, observe that \eqref{eq:Lyap_P^2} implies $\alpha^2 P\geq I$. Since $0<\alpha\leq 1$, this implies that $P\geq \frac{1}{\alpha^2} I\geq I$, concluding the proof. \hfill \QED

\subsection{Proof of Lemma~\ref{lem:properties}} \label{app: proof of lem sdp}

For this proof, we first need the following auxiliary result. 

\begin{lemma}
\label{lem:nil}
Let $\alpha\in(0,1]$, $M\in\mathbb{R}^{n\times n}$, and $\sigma$ denote the smallest singular value of $\alpha I-M$. If $M$ is nonzero and nilpotent, then $\sigma<\alpha$. 
\end{lemma}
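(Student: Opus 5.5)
Since $M$ is nilpotent, $0$ is its only eigenvalue, so $\alpha I-M$ is invertible with $\alpha$ as its only eigenvalue; hence its smallest singular value $\sigma$ is positive. Moreover, $\sigma=\min_{\|x\|=1}\|(\alpha I-M)x\|$, so to show $\sigma<\alpha$ it suffices to exhibit a unit vector $x$ with $\|(\alpha I-M)x\|<\alpha$. Squaring,
\begin{equation}
\|(\alpha I-M)x\|^2=\alpha^2-2\alpha\, x^\top M x+\|Mx\|^2 ,
\end{equation}
so the task reduces to finding a unit vector with $2\alpha\, x^\top Mx>\|Mx\|^2$.

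To build such a vector, I would use that $M\neq 0$. Let $k\geq 2$ be the nilpotency index, so $M^{k-1}\neq 0$ and $M^k=0$. Pick $w$ with $M^{k-1}w\neq 0$ and set $u=M^{k-2}w$ and $v=Mu$. Then $v\neq 0$ and $Mv=0$. (If $k=2$, simply take $u=w$.) Now try $x=(su+v)/\|su+v\|$ with $s>0$ a parameter to be tuned. Since $Mv=0$,
\begin{equation}
M(su+v)=sv,\qquad (su+v)^\top M(su+v)=s\,(su+v)^\top v=s^2u^\top v+s\|v\|^2 .
\end{equation}
The desired inequality, before normalization, becomes
\begin{equation}
2\alpha\,(s^2u^\top v+s\|v\|^2)>s^2\|v\|^2 .
\end{equation}
Dividing by $s$, this reads $2\alpha\|v\|^2+s(2\alpha u^\top v-\|v\|^2)>0$. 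As $s\to 0^+$ the left side tends to $2\alpha\|v\|^2>0$, so the inequality holds for all sufficiently small $s>0$. Normalization scales both sides by the same factor $1/\|su+v\|^2$ (note $su+v\neq 0$), so the strict inequality survives.

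This step, choosing the test vector, is the only real obstacle. A plain eigenvector $v\in\ker M$ gives exactly $\|(\alpha I-M)v\|=\alpha$ and no strict decrease, so $v$ must be perturbed toward a vector $u$ with $Mu=v$. The gain from the cross term $2\alpha s\|v\|^2$ is first order in $s$, while the cost $\|Mx\|^2=s^2\|v\|^2$ is second order, so a small perturbation wins. An equivalent formulation is to take $x=v+\epsilon u$, normalize, and expand to first order in $\epsilon$. This yields a unit vector with $\|(\alpha I-M)x\|<\alpha$, and therefore $\sigma<\alpha$.
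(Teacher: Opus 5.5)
Your proof is correct, but it takes a different route from the paper's.

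\textbf{The paper's route.} The paper argues by contradiction using only spectral facts. Browne's inequality gives $\sigma\le\alpha$. If $\sigma=\alpha$, then all singular values of $\alpha I-M$ are at least $\alpha$, and their product is $|\det(\alpha I-M)|=\alpha^n$, so they all equal $\alpha$. Hence $\tr\big((\alpha I-M)(\alpha I-M)^\top\big)=n\alpha^2$. Expanding this and using $\tr(M)=0$ gives $\tr(MM^\top)=0$, so $M=0$.

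\textbf{Your route.} You use the variational characterization $\sigma=\min_{\|x\|=1}\|(\alpha I-M)x\|$ and exhibit an explicit test vector $su+v$ along a Jordan chain, with $Mu=v\neq 0$ and $Mv=0$. The first-order gain $2\alpha s\|v\|^2$ beats the second-order cost $s^2\|v\|^2$. The computations check out. The fact $su+v\neq 0$, which you state without proof, follows in one line: $su+v=0$ would give $0=Mv=-sMu=-sv$, contradicting $v\neq 0$.

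\textbf{Comparison.} The paper's argument is shorter once Browne's inequality is granted, but it is non-constructive. It also uses nilpotency in full, through $\spec(\alpha I-M)=\{\alpha\}$ (for both the bound $\sigma\le\alpha$ and the determinant) and through $\tr(M)=0$. Yours is self-contained and identifies an explicit direction in which $\alpha I-M$ shrinks norms below $\alpha$. It also proves more than the statement: you never use nilpotency beyond the existence of a nonzero $v\in\ker M$ lying in the range of $M$. So your conclusion $\sigma<\alpha$ holds for every real $M$ with that property, not just nilpotent ones. Your preliminary remark that $\sigma>0$ is true but not needed.
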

\begin{proof}
Since all eigenvalues of $\alpha I-M$ are equal to $\alpha$, by Browne's inequality \cite[p. 16-2]{hogben2006handbook},  we have $\sigma\leq\alpha$. Now, to show that $\sigma<\alpha$, suppose on the contrary that $\sigma=\alpha$. Since $\det(\alpha I-M)=\alpha^n$, the product of all singular values is equal to $\alpha^n$. This, together with $\sigma=\alpha$, implies that all singular values of $\alpha I-M$ are equal to $\alpha$. Thus, $\tr ((\alpha I-M)(\alpha I-M)^\top)=n\alpha^2$. Since $M$ is nilpotent, this implies $M=0$. Thus, we reach a contradiction and $\sigma< \alpha$. 
\end{proof}

\textit{Proof of Lemma~\ref{lem:properties}:} (a) It follows by definition that \mbox{$\rho_\alpha(\hat{A},\hat{B})\geq0$}. Now, to show that $\rho_\alpha(\hat{A},\hat{B})\leq\alpha$, we suppose on the contrary that $\rho_\alpha(\hat{A},\hat{B})>\alpha$. This implies that $\mathcal{B}_r(\hat{A},\hat{B})$ is $\alpha$--quadratically stabilizable for some $r>\alpha$. Note that $(\hat{A}+\beta I,\hat{B})\in\mathcal{B}_r(\hat{A},\hat{B})$ for all $\beta\in[-r,r]$. Now, since $\mathcal{B}_r(\hat{A},\hat{B})$ is $\alpha$--quadratically stabilizable, due to Lemma~\ref{lem:QS_compact}, there exists $P\geq I$ and $K$ such that
\begin{equation}
\label{eq:Lyap_beta-r}
\alpha^2 P-(\hat{A}+\beta I+\hat{B}K)^\top P(\hat{A}+\beta I+\hat{B}K)\geq I
\end{equation}
for all $\beta\in[-r,r]$. Let $\lambda\in \mathbb{C}$ and $v\in\mathbb{C}^n$ with $\|v\|=1$ be such that $(\hat{A}+\hat{B}K)v=\lambda v$. Let $\beta=\frac{|\Re(\lambda)|}{\Re(\lambda)}r$ if $\Re(\lambda)\neq0$ and $\beta=r$ if $\Re(\lambda)=0$. Multiply \eqref{eq:Lyap_beta-r} from left and right, respectively, by $v^*$ and $v$ to obtain $(\alpha^2-|\lambda+\beta|^2) v^* Pv\geq 1$. Since $P\geq I$, this implies that $\alpha>|\lambda+\beta|$. Due to $r>\alpha$, this inequality does not hold. Therefore, we reach a contradiction, proving that $\rho_\alpha(\hat{A},\hat{B})\leq\alpha$.


(b) To prove the ``if'' part, suppose that $\hat{A}=0$. From part (a), we have \mbox{$\rho_\alpha(0,\hat{B})\leq\alpha$}. To prove that $\rho_\alpha(0,\hat{B})=\alpha$, it suffices to show that \mbox{$\rho_\alpha(0,\hat{B})\geq\alpha$}, i.e., $\mathcal{B}_r(0,\hat{B})$ is $\alpha$--quadratically stabilizable for all $r\in[0,\alpha)$. For this, let $r\in[0,\alpha)$, $K=0$, and $P=\frac{1}{\alpha^2-r^2}I$. We show that \eqref{eq:Lyap_P^2} holds for all members of $\mathcal{B}_r(0,\hat{B})$. Let $(A,B)\in\mathcal{B}_r(0,\hat{B})$ and note that $A^\top A\leq r^2 I$. Hence, \eqref{eq:Lyap_P^2} holds as $\tfrac{1}{\alpha^2-r^2}(\alpha^2I-A^\top A)\geq I$. To prove the ``only if'' part, suppose that $\rho_\alpha(\hat{A},\hat{B})=\alpha$. There exists $P\geq I$ and $K$ such that
\begin{equation}
\label{eq:Lyap_Delta_A Delta_B}
\alpha^2 P-(M_K+\Delta_A+\Delta_BK)^\top P(M_K+\Delta_A+\Delta_BK)\geq I
\end{equation}
for all $(\Delta_A,\Delta_B)\in\Sigma$ with $\norm{\begin{bmatrix}
\Delta_A & \Delta_B
\end{bmatrix}}<\alpha$.
Here, we denote $M_K\coloneqq\hat{A}+\hat{B}K$. First, we will show that $M_K$ is nilpotent.
Hence, in particular, \mbox{$\alpha^2 P-(M_K+\beta I)^\top P(M_K+\beta I)\geq I$} for all \mbox{$\beta\in(-\alpha,\alpha)$}. Let $\lambda\in \mathbb{C}$ and $v\in\mathbb{C}^n$ be such that $\|v\|=1$ and $M_Kv=\lambda v$. Taking the same steps as in the proof of part (a), one can verify that $\alpha> |\lambda+\beta|$ for all $\beta\in(-\alpha,\alpha)$. This implies that $\lambda=0$, thus, $M_K$ is nilpotent. Next, we will show that $M_K=0$. Aiming for a contradiction, suppose that $M_K\neq 0$. Let $\sigma\geq 0$ denote the smallest singular value of $\alpha I-M_K$. It follows from Lemma~\ref{lem:nil} that $\sigma<\alpha$. Now, let $x,y\in\mathbb{R}^n$ be such that $(\alpha I-M_K)y=\sigma x$ and $\|x\|=\|y\|=1$. Take $\Delta_A=\sigma xy^\top$ and observe that $\|\Delta_A\|= \sigma<\alpha$ and $(M_K+\Delta_A)y=\alpha y$. Take $\Delta_B=0$ and multiply \eqref{eq:Lyap_Delta_A Delta_B} from the left and right, respectively, by $y^\top$ and $y$ to reach a contradiction. Therefore, $M_K=0$. Finally, to prove that $\hat{A}=0$, it suffices now to show that $K=0$. For this, suppose on the contrary that $K\neq 0$. Let $v,w\in\mathbb{R}^n$ be such that $Kv=\|K\|w$ and $\|v\|=\|w\|=1$. Take $\Delta_A=\frac{\alpha}{1+\|K\|^2}vv^\top$ and $\Delta_B=\frac{\alpha\|K\|}{1+\|K\|^2}vw^\top$. One can verify that $\norm{\begin{bmatrix}
\Delta_A & \Delta_B
\end{bmatrix}}<\alpha$. We observe that $(\Delta_A+\Delta_B K)v=\alpha v$. Now, multiply \eqref{eq:Lyap_Delta_A Delta_B} from the left and right, respectively, by $v^\top$ and $v$ and substitute $M_K=0$ to reach a contradiction. Therefore, $K=0$ and thus $\hat{A}=0$.

(c) The ``if'' part follows from the definition of $\rho_\alpha(\hat{A},\hat{B})$. To prove the ``only if'' part, we use contraposition. Suppose that $(\hat{A},\hat{B})$ is $\alpha$--stabilizable. By Lemma~\ref{lem:alpha_Hautus}, there exist $P>0$, $K$, and $\gamma>0$ such that \mbox{$\alpha^2P-(\hat{A}+\hat{B}K)^\top P(\hat{A}+\hat{B}K)> \gamma I>0$}. Thus, there exists a sufficiently small $r>0$ such that \mbox{$\alpha^2P-(A+BK)^\top P(A+BK)> 0$} for all \mbox{$(A,B)\in\mathcal{B}_r(\hat{A},\hat{B})$}. Therefore, we have $\rho_\alpha(\hat{A},\hat{B})\geq r>0$. \hfill \QED

\subsection{Proof of Theorem~\ref{th:dynamic_cont}}
\label{app: proof of th dynamic_cont}

First, we will show that property \ref{(P4)} holds for $0<\alpha\leq 1$. To facilitate the proof, we define 
\begin{equation}
\kappa\coloneqq \max\set{\norm{A\!+\!BK_{i}}}{(A,B)\in\Sigma_\text{pk},i\in\{1,\ldots,q\}}, 
\end{equation}
$\lambda_\text{max}\coloneqq \max_{i}\lambda_\text{max}(P_i)$, and $\lambda_\text{min}\coloneqq \min_i\lambda_\text{min}(P_i)$. For simplicity, in this proof, we refer to $x(t,x_0,z_0)$ and $z(t,x_0,z_0)$, respectively, by $x(t)$ and $z(t)=\begin{bmatrix}
z_1(t) & z_2(t)
\end{bmatrix}^\top$. We define the set $\mathcal{T}\subset\mathbb{Z}_+$ as $\mathcal{T}\coloneqq\set{t\geq 1}{x(t)^\top P_{z_2(t)} x(t) > z_1(t)}$. Let $(A,B)\in\Sigma_\text{pk}$. Then, there exists a $k\in\{1,\ldots,q\}$ such that
\begin{equation}\label{eq:Lyap_P^2 - app}
\alpha^2 P_{k}-(A+BK_{k})^\top P_{k}(A+BK_{k})\geq I.
\end{equation}
This implies that if $z_2(T)=k$, then for all $t\geq T$ we have $x(t)^{\top} P_{z_2(t)} x(t)\leq z_1(t)$. Therefore, the set $\mathcal{T}$ is either empty or finite with at most $q-1$ elements. 
Let $0\leq\bar{q}\leq q-1$ denote the number of elements of $\mathcal{T}$.
Define $T_0=0$ and $T_{\bar{q}+1} = \infty$.
If $\mathcal{T}\neq\varnothing$, let $T_i$, $i\in\{1,\ldots,\bar{q}\}$, denote the elements of $\mathcal{T}$ ordered as $T_1\leq T_2\leq\cdots\leq T_{\bar{q}}$. 

Suppose $t\in [T_i,T_{i+1})\cap \mathbb{Z}$ for some $i\in\{0,\ldots,\bar{q}\}$. 
In case $t=T_0$, then $\|x(t,x_0,z_0)\|\leq c\eta^t\|x_0\|$ is trivially satisfied for any $c\geq 1$ and $0\leq\eta< \alpha$.
In case $t=T_i$ for some $i\in\{1,\ldots,\bar{q}\}$, the identity \mbox{$x(T_i)=(A+BK_{z_2(T_i)})x(T_i-1)$} implies that 
\begin{equation}
\label{eq:state3}
\norm{x(T_i)}\leq \norm{A+BK_{z_2(T_i)}}\norm{x(T_i-1)}\leq \kappa \norm{x(T_i-1)}.
\end{equation}
In case $t\in (T_i,T_{i+1})\cap\mathbb{Z}$ for some $i\in\{0,\ldots,\bar{q}\}$,  we have $z_2(t)=z_2(T_i)$, $x(t)^{\top} P_{z_2(t)} x(t)\leq z_1(t)$, and \mbox{$z_1(t)=\alpha^2 x(t-1)^\top P_{z_2(t)} x(t-1)-\|x(t-1)\|^2$}. Thus, we have
\begin{equation} 
x(t)^\top P_{z_2(T_i)}x(t)\leq \alpha^2 x(t-1)^\top P_{z_2(T_i)}x(t-1)-\|x(t-1)\|^2.
\end{equation}
This, together with the inequality $x(t-1)^\top P_{z_2(T_i)}x(t-1)\leq \lambda_\text{max}\|x(t-1)\|^2$,
implies that
\begin{equation}
x(t)^\top P_{z_2(T_i)}x(t)\leq (\alpha^2-\tfrac{1}{\lambda_\text{max}}) x(t-1)^\top P_{z_2(T_i)}x(t-1).
\end{equation}
Hence, we have
\begin{equation}
\label{eq:lyap_first_case}
x(t)^\top P_{z_2(T_i)}x(t)\leq (\alpha^2-\tfrac{1}{\lambda_\text{max}})^{t-T_i} x(T_i)^\top P_{z_2(T_i)}x(T_i).
\end{equation}
Therefore, using $x(T_i)^\top P_{z_2(T_i)}x(T_i)\leq\lambda_\text{max}\|x(T_i)\|^2$ and $x(t)^\top P_{z_2(T_i)}x(t)\geq\lambda_\text{min}\|x(t)\|^2$, we have
\begin{equation}
\label{eq:state0}
\|x(t)\|\leq \sqrt{\tfrac{\lambda_\text{max}}{\lambda_\text{min}}}(\alpha^2-\tfrac{1}{\lambda_\text{max}})^{(t-T_i)/2} \|x(T_i)\|.
\end{equation} 
By repeatedly applying \eqref{eq:state3} and \eqref{eq:state0}, one can verify that 
\begin{equation}
    \norm{x(t)} \leq \kappa^{i}\left(\tfrac{\lambda_\text{max}}{\lambda_\text{min}}\right)^{(i+1)/2}(\alpha^2-\tfrac{1}{\lambda_\text{max}})^{t/2} \|x(T_0)\|.
\end{equation}
Define $\bar{\kappa}\coloneqq \max\{1,\kappa^{q-1}\}$.
Then, since $i\leq q-1$, this shows that 
\begin{equation}\label{eqn: bound on norm of x}
    \| x(t)\| \leq \bar{\kappa} \left(\tfrac{\lambda_\text{max}}{\lambda_\text{min}}\right)^{q/2} (\alpha^2-\tfrac{1}{\lambda_\text{max}})^{t/2} \|x(T_0)\|
\end{equation} 
for all $t\in\mathbb{Z}_+$. 
This proves that property \ref{(P4)} holds for $\alpha\leq 1$, 
\mbox{$\eta = \sqrt{\alpha^2-\tfrac{1}{\lambda_\text{max}}}$}, and $c = \bar{\kappa} \left(\tfrac{\lambda_\text{max}}{\lambda_\text{min}}\right)^{q/2}\geq 1$.
We note that \mbox{$0\leq\eta<\alpha$} since  \eqref{eq:Lyap_P^2 - app} implies that $\lambda_\text{max}\geq\tfrac{1}{\alpha^2}$.

Next, we show that property~\ref{(P1)} holds. Let $(A,B)\in\Sigma_\text{pk}$, \mbox{$x_0\in\mathbb{R}^n$}, and $z_0\in\mathbb{R}^2$. 
Denote $z_0=\begin{bmatrix} z_{10} & z_{20}\end{bmatrix}^\top$. Take $\gamma$ as
\begin{equation}
\gamma(x_0,z_0) \coloneqq \max\{|z_{10}|,\xi(x_0)\} + \max\{|z_{20}|,q\},
\end{equation}
where $\xi(x_0) \coloneqq  c^2(\alpha^2\lambda_\text{max}-1) \norm{x_0}^2$. Observe from the triangle inequality and the partitioning of the controller state that \mbox{$\norm{z(t)} \leq |z_1(t)| + |z_2(t)|$} holds for all $t\in\mathbb{Z}_+$. This implies that $\norm{z(0)} \leq \gamma(x_0,z_0)$.  
For $t\geq 1$, we use 
\begin{equation}
\label{eq:last}
z_1(t) = \alpha^2 x(t-1)^\top P_{z_2(t)} x(t-1) - \|x(t-1)\|^2 
\end{equation}
to verify that $|z_1(t)|  \leq  (\alpha^2\lambda_\text{max}-1)\|x(t-1)\|^2$. Moreover, from \eqref{eqn: bound on norm of x}, we have that \mbox{$\|x(t)\|\leq c\|x_0\|$} for all $t$. This, together with \eqref{eq:last} and $\alpha\leq 1$, implies that $\norm{z(t)} \leq \gamma(x_0,z_0)$. Therefore, property~\ref{(P1)} holds. \hfill \QED